\newtheorem{assumption}{Assumption}
\newtheorem{thm}{Theorem}%[section]%added by F.Y
\newtheorem{prop}{Proposition}%added by F.Y
\newtheorem{lem}{Lemma}%added by F.Y
\newtheorem{rem}{Remark}%added by F.Y
\crefname{equation}{}{}
\crefname{theorem}{Theorem}{Theorems}
\crefname{corollary}{Corollary}{Corollaries}
\crefname{example}{Example}{Examples}
\crefname{assumption}{Assumption}{Assumptions}
\crefname{lemma}{Lemma}{Lemmas}
\crefname{proposition}{Proposition}{Propositions}
\crefname{figure}{Figure}{Figures}
\crefname{table}{Table}{Tables}
\crefname{section}{Section}{Sections}
\crefname{appendix}{Appendix}{Appendices}
\crefname{definition}{Definition}{Definitions}
\crefname{algorithm}{Algorithm}{Algorithms}
\crefname{thm}{Theorem}{Theorems}
\crefname{prop}{Proposition}{Propositions}
\crefname{lem}{Lemma}{Lemmas}
\crefname{deff}{Definition}{Definitions}
\crefname{rem}{Remark}{Remarks}
\crefname{cor}{Corollary}{Corollaryies}
\Crefname{equation}{}{}
\Crefname{theorem}{Theorem}{Theorems}
\Crefname{corollary}{Corollary}{Corollaries}
\Crefname{example}{Example}{Examples}
\Crefname{lemma}{Lemma}{Lemmas}
\Crefname{proposition}{Proposition}{Propositions}
\Crefname{figure}{Figure}{Figures}
\Crefname{fact}{Fact}{Facts}
\Crefname{table}{Table}{Tables}
\Crefname{section}{Section}{Sections}
\Crefname{appendix}{Appendix}{Appendices}
\crefname{prop}{Proposition}{Propositions}
\newcommand{\tr}{\mathsf{T}}
\newcommand{\RR}{\mathbb{R}}
\newcommand{\Dist}{\mathrm{dist}}
\DeclareMathOperator*{\argmin}{\arg\!\min}
\newcommand{\innerproduct}[2]{\left \langle #1, #2 \right\rangle }
\newcommand{\bigO}{\mathcal{O}}
\newcommand{\ProDes}{$\mathtt{ProxDescent}$}
\title[An accelerated PBM for convex optimization]{An accelerated proximal bundle method for convex optimization}
\author{%
 \Name{Feng-Yi Liao} \Email{fliao@ucsd.edu }\\
 \addr Department of Electrical and Computer Engineering, University of California San Diego
 \AND
 \Name{Thomas Madden} \Email{thmadden@ucsd.edu}\\
 \addr Department of Mathematics, University of California San Diego%
 \AND
 \Name{Yang Zheng} \Email{zhengy@ucsd.edu}\\
 \addr Department of Electrical and Computer Engineering, University of California San Diego%
}
\begin{document}

\maketitle

\vspace{-3mm}
\begin{abstract}
The proximal bundle method (PBM) is a powerful and widely used approach for minimizing nonsmooth convex functions. 
However, for smooth objectives, its best-known convergence rate remains suboptimal, and whether PBM can be accelerated remains open. 
In this work, we present the first \emph{accelerated proximal bundle method} that achieves the optimal $\mathcal{O}(1/\sqrt{\epsilon})$ iteration complexity for obtaining an $\epsilon$-accurate solution in smooth convex optimization.
The proposed method is \textit{conceptually simple}, and differs from Nesterov’s accelerated gradient descent by only a single line and retains all key structural properties of the classical PBM.
In particular, it relies on the same minimal assumptions on model approximations and preserves the standard bundle testing criterion. 
Numerical experiments confirm the accelerated $\mathcal{O}(1/\sqrt{\epsilon})$ convergence rate predicted by our theory.
\end{abstract}

\begin{keywords}%
  Acceleration, proximal bundle method, smooth convex optimization
\end{keywords}

\section{Introduction}
Convex optimization plays a fundamental role in various disciplines \citep{nesterov2018lectures,boyd2004convex}. %, including machine learning \citep{majumdar2020recent}, signal processing \citep{chi2017convex}, finance \citep{pennanen2012introduction}, and control theory \citep{wolkowicz2012handbook}. %Among all subareas of convex optimization, 
Among its many subfields, \textit{smooth and convex} optimization is arguably the most fundamental, which considers the unconstrained problem 
\begin{equation}
     \label{eq:opt-pb}
    f^\star = \min_{x \in \RR^n} \; f(x)
\end{equation}
where $f:\RR^n \to \RR$ is  convex and $M$-smooth, i.e., $\| \nabla f(x) - \nabla f(y) \| \leq M \|x - y\|, \forall \ x,y  \in \RR^n $. In this setting, the simplest yet fundamental algorithm is the gradient descent, which moves along the negative gradient direction at every iteration, i.e., $x_{k+1} = x_k - \alpha_k \nabla f(x_k),$ where $\alpha_k$ denotes the stepsize. A standard textbook result shows that with a constant step size $\alpha_k = \frac{1}{M}$, gradient descent can find an $\epsilon$-solution (i.e., an iterate $x_k$ such that $f(x_k) - f^\star \leq \epsilon$) within  $\bigO(1/\epsilon)$ iterations (\citet[Section~1.2.3]{nesterov2018lectures}). Nevertheless, the rate is only suboptimal. Nesterov introduced an accelerated gradient descent (AGD), which improves the iteration complexity to the optimal complexity $\bigO(1/\sqrt{\epsilon})$ (\citet[Section~2.2]{nesterov2018lectures}). 

Another conceptually simple algorithm is the \textit{proximal point method} (PPM), which follows an implicit (sub)gradient update. Equivalently, PPM can be viewed as applying gradient descent to the Moreau envelope (\citet[Section 4.1.1]{parikh2014proximal}). It achieves the same $\bigO(1/\epsilon)$ iteration complexity for both smooth and nonsmooth convex functions (\citet[Theorem 2.1]{guler1991convergence}). Inspired by Nesterov's AGD, various accelerated PPMs have also been proposed to improve the complexity to $\bigO(1/\sqrt{\epsilon})$; see e.g., \citep{he2012accelerated, monteiro2013accelerated, salzo2012inexact}.  

Although PPM can handle both smooth and nonsmooth objectives, its update is often computationally intractable, as it requires solving a proximal subproblem exactly. 
To address this limitation, the \textit{proximal bundle method} (PBM), originally introduced in \citep{lemarechal1978nonsmooth,mifflin1977algorithm} for nonsmooth functions, relaxes the exact proximal step to an inexact implicit (sub)gradient update. This relaxation greatly reduces computational cost while preserving the convergence properties of PPM. The asymptotic convergence of PBM iterates to an optimal solution was first established in \citet{kiwiel1983aggregate}.
Subsequently, \citet{kiwiel2000efficiency} provided the first nonasymptotic complexity bound of $\mathcal{O}(1/\epsilon^3)$ for obtaining an $\epsilon$-solution of general convex (potentially nonsmooth) functions. More recently, sharper convergence guarantees have been established under additional growth or smoothness assumptions \citep{du2017rate, diaz2023optimal}.
Nevertheless, even for smooth convex functions, the best-known iteration complexity of PBM remains suboptimal at $\mathcal{O}(1/\epsilon)$, even with adaptive step-size rules \citep[Table~1]{diaz2023optimal}.

It remains unclear whether and how the classical PBM can be accelerated to achieve the optimal rate $\mathcal{O}(1/\sqrt{\epsilon})$.  
As noted in two recent studies  \citep{diaz2023optimal, liang2024unified}, this question is still open. One difficulty lies in the convergence analysis itself: even for the standard PBM, establishing tight iteration-complexity bounds has been challenging and was only recently clarified in \cite{diaz2023optimal}. Another challenge is algorithmic: unlike gradient-based methods, it is not obvious how to introduce momentum or extrapolation into the traditional single-loop PBM framework. Very recently, \cite{fersztand2025acceleration} achieved an improved complexity of $\mathcal{O}(\log(1/\epsilon)/\sqrt{\epsilon})$ by employing more intricate bundle model assumptions.

In this work, we introduce the \textit{first} accelerated PBM that achieves the optimal iteration complexity $\bigO(1/\sqrt{\epsilon})$ for finding an $\epsilon$-solution for smooth convex functions.~Algorithmically, our development of accelerated PBM is derived intuitively from Nesterov's famous AGD \citep{nesterov1983method}. In particular, our accelerated PBM only differs by one line from Nesterov's AGD. Under a proper choice of parameters and under-estimators, our proposed method reduces to Nesterov's AGD exactly. Moreover, our proposed accelerated PBM preserves \textit{all} key features of the classical PBM, as it \textit{does not} change the testing criterion of the classical PBM or impose any additional assumptions on the under-estimators. This is made possible through a crucial observation that the proximal bundle update can be interpreted as an inexact implicit (sub)gradient step, which naturally accommodates Nesterov-type momentum and enables acceleration. 
Importantly, our proposed accelerated PBM can be viewed as a special realization of the abstract accelerated inexact proximal point framework in \cite{monteiro2013accelerated}. We present a detailed comparison with \cite{fersztand2025acceleration} in \Cref{remark:comparison}.

The rest of this paper is organized as follows. \Cref{sec:prelim} reviews the proximal point method and the PBM. \cref{sec:ACC-PBM} introduces our proposed accelerated PBM. \cref{sec:proofs} establishes the convergence guarantees. \cref{sec:Numerical} shows numerical experiments, and \cref{sec:conclusion} concludes the paper.

\section{Preliminaries and problem statement} 
\label{sec:prelim}

\subsection{Gradient descent and proximal point methods} 
Consider the optimization problem \cref{eq:opt-pb}. If the objective $f$ is a differentiable convex function with $M$-Lipschitz continuous gradient, the standard \emph{gradient descent} (GD) method performs the update
\begin{equation} \label{eq:gradient-update}
x_{k+1} = x_k - \alpha_k\nabla f(x_k),
\end{equation}
where $\alpha_k$ denotes the step size. With the constant stepsize $\alpha_k = {1}/{M}$, the GD iterates converge with a rate $f(x_k) -f^\star = \mathcal{O}(1/k)$; see, e.g., Corollary 2.1.2 in \cite{nesterov2018lectures}

For a general convex (possibly nonsmooth) objective $f$, the \emph{proximal point method} (PPM) replaces the gradient step \cref{eq:gradient-update} with a proximal update 
\begin{equation} \label{eq:PPM-update}
y_{k+1} = \arg\min_{x}\, \left\{f(x) + \frac{\rho}{2}\|x - y_k\|^2\right\}, \qquad \rho>0. 
\end{equation}
The PPM update \cref{eq:PPM-update} is well-defined for any $\rho >0$ since the subproblem is strongly convex and thus admits a unique solution. It is known that the PPM has the same convergence rate $f(y_k) - f^\star = \bigO(1/k)$; see e.g., \citet[Theorem 2.1]{guler1991convergence}. %
This rate holds for both smooth and~nonsmooth convex objectives. In general, the PPM update \cref{eq:PPM-update} may not be realizable efficiently. Nevertheless, the PPM serves as a conceptual foundation for modern proximal and bundle-type algorithms \citep{drusvyatskiy2017proximal,liang2021proximal,diaz2023optimal,liao2025bundle,liao2025proximal}.

\subsection{Proximal bundle method as an inexact PPM}
\label{subsec:PBM}

\begin{wrapfigure}[7]{r}{0.44\textwidth} % r/l for right/left, width as desired
  \vspace{-8.5mm}                % optional: tighten vertical spacing
  
  \begin{minipage}{\linewidth}
   \begin{algorithm}[H]
\caption{\small Proximal Bundle method (PBM)}\label{alg:bundle}
\begin{algorithmic}[1]
\REQUIRE $y_0 \in \RR^n, \rho > 0, \beta \in (0,1)$, $k_{\max}\in \mathbb{N}$
\FOR{$k=0,1,2, \ldots,k_{\max}$}
    \STATE $y_{k+1} = $ \ProDes($y_k,\beta,\rho$); 
\ENDFOR
\end{algorithmic}
\end{algorithm}
  \end{minipage}
\end{wrapfigure}

It is known that the PPM \cref{eq:PPM-update} can be interpreted as an algorithm which performs an \textit{implicit} (sub)gradient update \citep{correa1993convergence}. 
From the optimality condition, \cref{eq:PPM-update} can be equivalently written as   
\begin{align} \label{eq:PPM-gradient}
    y_{k+1} = y_k - \frac{1}{\rho} g_{k + 1},
\end{align}
where $g_{k + 1} \in \partial f(y_{k+1})$ is a subgradient at the next iterate $y_{k+1}$. Recall that the usual convex subdifferential is defined as $\partial f(x) :=  \{s \in \RR^n \mid f(y) \geq f(x) + \innerproduct{s}{y-x}, \forall y \in \RR^n \}$. If the function $f$ is differentiable, the subdifferential reduces to the usual gradient, i.e., $ \partial f(x) = \{\nabla f(x)\}$. 

Since the PPM update \cref{eq:PPM-update} or \cref{eq:PPM-gradient} is non-trivial, inexact variants of the PPM have been widely considered \citep{rockafellar1976monotone}. We here introduce the class of proximal bundle methods (PBMs)  \citep{kiwiel2000efficiency} as a special inexact PPM. We list the PBM as a double-loop algorithm in \Cref{alg:bundle} involving a subroutine \ProDes($y_k,\beta,\rho$), which we shall define next.  

We recall the notion of $\epsilon$-inexact subdifferential for a convex function $f$:
\begin{equation} \label{eq:inexact-subdifferential}
\partial_{\epsilon} f(x) := \{ v \in \RR^n \mid f(y) \geq f(x) + \innerproduct{v}{y - x} - \epsilon, \forall \ y \in \RR^n\},
\end{equation}
where $\epsilon \geq 0$. It is clear that we have $\partial_{\epsilon} f(x) = \partial f(x)$ when $\epsilon = 0$. Instead of the true subgradient update \cref{eq:PPM-gradient}, the PBM considers an inexact update 
\begin{align} \label{eq:PPM-gradient-inexact}
    y_{k+1} = y_k - \frac{1}{\rho} \tilde{g}_{k + 1},
\end{align}
where $\tilde{g}_{k + 1}\! \! \in\! \!\partial_{\epsilon_{k+1}} f(y_{k+1})$ is an inexact subgradient at the next iterate $y_{k+1}$ with inexactness $\epsilon_{k+1} \! \geq \! 0$ coming from a descent criterion that depends on a pre-defined parameter $\beta \! \in \! (0,1]$; see \cref{eq:testing-bundle}. 

In general, finding such an inexact subgradient $\tilde{g}_{k + 1}$ requires an iterative search. For brevity, we denote such a subroutine as \texttt{$y_{k+1}=$ \ProDes$(y_k,\beta,\rho)$}, since the iterate $y_{k+1}$ comes from simple proximal updates (see \cref{eq:PBM-trial-point}) and satisfies a certain descent performance (see \cref{eq:testing-bundle}).

In particular, \ProDes($y_k,\beta,\rho$) is realized efficiently via an iterative bundle strategy, as detailed below.  

\begin{wrapfigure}[10]{r}{0.46\textwidth} % r/l for right/left, width as desired
  \vspace{-5.5mm}                % optional: tighten vertical spacing
  
  \begin{minipage}{\linewidth}
   \begin{algorithm}[H]
\caption{\ProDes($y_k,\beta,\rho$)}\label{alg:Proxi-descent-subproblem}
\begin{algorithmic}[1]
\STATE Initialize $z_1  = y_k$ 
\FOR{$j=1,2,\ldots$}
    \STATE Construct $\tilde{f}_j$ satisfying \cref{assump:bm};
    \STATE Compute $z_{j+1}$ using \cref{eq:PBM-trial-point};  
    \IF{\cref{eq:testing-bundle} holds} Break;
    % \STATE Break;
    \ENDIF
\ENDFOR
 \STATE \textbf{Return} $z_{j+1}$;  
\end{algorithmic}
\end{algorithm}
  \end{minipage}
\end{wrapfigure}

\vspace{2mm}

\noindent \textbf{Proximal bundle strategy:} A unique feature of the PBM is to approximate the original function $f$ such that the proximal update becomes much simpler. Starting with $j = 1$ and $z_1 = y_k$, the PBM generates a sequence of candidate points 
\begin{align}
    \label{eq:PBM-trial-point}
    z_{j+1} = \argmin_{y}\; \left\{\tilde{f}_{j}(y) + \frac{\rho}{2}\|y - y_k\|^2 \right\}, 
\end{align}
where $\tilde{f}_j$ is a convex under-estimator of $f$. Since $f$ is convex, such an under-estimator  $\tilde{f}_j$ is easy to construct, e.g., using its (sub)gradient $s\in \partial f(y_k)$. 
If this candidate point $z_{j+1}$ satisfies
\begin{equation}
    \label{eq:testing-bundle}
    f(y_{k}) - f(z_{j+1}) \geq \beta(f(y_{k}) -  \tilde{f}_j (z_{j+1})), 
\end{equation}
where $\beta \in (0,1]$, then we set $y_{k+1} = z_{j+1}$; otherwise, we construct a new convex under-estimator $\tilde{f}_{j+1}$ satisfying \cref{assump:bm}, and repeat \cref{eq:PBM-trial-point} with $\tilde{f}_{j+1}$ until \cref{eq:testing-bundle} is satisfied. 
We list this procedure in \Cref{alg:Proxi-descent-subproblem}. The criterion \cref{eq:testing-bundle} indicates that the candidate $z_{j+1}$ achieves at least $\beta$ fraction of the objective value decrease that the model $f_j(\cdot)$ predicts. If  \cref{eq:testing-bundle} holds, this is known as a \textit{descent} step (and \Cref{alg:Proxi-descent-subproblem} terminates); otherwise, it is a \textit{null} step. 
At this stage, it is not obvious how such a criterion \cref{eq:testing-bundle} is connected with the inexact update \cref{eq:PPM-gradient-inexact}. We will detail their relationship in \Cref{sec:proofs}, which turns out to be one key observation that leads to our accelerated PBM algorithm. 

\vspace{-2mm}

\begin{assumption}
    \label{assump:bm} 
    Let $z_{1} = y_k$. For $j \geq 1$, the convex function $f_{j}$ satisfies three conditions: 
    \vspace{-2mm}
    \begin{enumerate}[leftmargin=*]
    \setlength{\itemsep}{0pt}
        \item \textbf{Lower approximation:} $\tilde{f}_{j}(y) \leq f(y), \forall y \in \mathbb{R}^n.$ \vspace{-1pt}
        \item \textbf{Subgradient:} There exists $ g_{j} \in \partial f(z_{j})$ such that 
        $\tilde{f}_{j}(y) \geq f(z_{j}) + \innerproduct{g_{j}}{y-z_{j}}, \forall y \in \RR^n.$ 
        \item \textbf{Aggregation:} If $j > 1$ (equivalently, \cref{eq:testing-bundle} does not hold for $z_{j}$), we require
        $\tilde{f}_{j}(y) \geq \tilde{f}_{j-1}(z_{j}) + \innerproduct{s_{j}}{y-z_{j}},\forall y \in \RR^n,$
        where $s_{j} = \rho(y_k - z_{j}) \in \partial  \tilde{f}_{j-1}(z_{j})$.
    \end{enumerate}
\end{assumption}

\vspace{-2mm}

In \cref{assump:bm}, the first requirement means that $\tilde{f}_{j}$ is a global under-estimator of $f$, the second requirement means that $\tilde{f}_{j}$ should be a better model of $f$ than the subgradient lower-bound of $f$ at $z_{j}$, and the last requirement asks that $\tilde{f}_{j}$ is also better than the linearization of the previous model $\tilde{f}_{j-1}$ at $z_j$. The second and the third requirements together guarantee that the value $\eta_j:=\min_{y} \{\tilde{f}_{j}(y) + \frac{\rho}{2}\|y - y_k\|^2 \}$ increases monotonically as $j$ increases, and the first requirement guarantees that this value $\eta_j$ is upper-bounded as  $\eta_j \leq \eta^*:= \min_{y} \{f(y) + \frac{\rho}{2}\|y - y_k\|^2 \}$ for all $j\geq 1$.  

Any approximation model $\tilde{f}_j$ satisfying \cref{assump:bm} can ensure that \cref{alg:Proxi-descent-subproblem} terminates with a finite number of iterations for general convex functions; see e.g., \cite{diaz2023optimal}. Specifically, we can choose $\tilde{f}_j$ as the maximum of two linear lower bounds:  
\begin{equation}
\label{eq:finite-memory-models}
    \tilde{f}_{j}(y) = \max\{ f(z_{j}) + \innerproduct{g_{j}}{y-z_{j}}, \tilde{f}_{j-1}(z_{j}) + \innerproduct{s_{j}}{y-z_{j}}\}.
\end{equation}
In this case, the proximal update \cref{eq:PBM-trial-point} admits an analytically closed-form solution, which is almost as efficient as the simple (sub)gradient update \cref{eq:gradient-update}. 
For specific applications, we may choose other approximating models \citep{helmberg2000spectral,liao2025overview}.  
If $f$ is a smooth convex function, \cref{alg:Proxi-descent-subproblem} will terminate within a constant number of iterations.

\vspace{-2mm}

\begin{lem}[{\citet[Lemma 5.2 and Theorem 2.2]{diaz2023optimal}}] \label{lemma:descent-step}
    Suppose $f$ is convex and $M$-smooth. Let $\beta \in (0,1)$ and $\rho>0$. Then, for any $y_k \in \mathbb{R}^n$, \texttt{ProxDescent($y_k,\beta,\rho$)} in \cref{alg:Proxi-descent-subproblem} terminates in at most $\frac{16(M+\rho)^3}{(1-\beta)^2 \rho^3}$ iterations. Accordingly, the total iteration complexity of \Cref{alg:bundle} (including all inner iterations) to find an $\epsilon$-optimal solution is $\mathcal{O}(1/\epsilon)$.
\end{lem}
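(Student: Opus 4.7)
The plan is to prove the statement in two essentially independent pieces: an inner bound on the number of null iterations executed by \texttt{ProxDescent}$(y_k,\beta,\rho)$ during any single call, and an outer bound on the number of descent steps \cref{alg:bundle} must perform to attain an $\epsilon$-optimal solution. The total iteration count is then the product of these two quantities.

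For the inner bound, I would track the proximal subproblem value $\eta_j := \min_{y}\{\tilde{f}_{j}(y) + \frac{\rho}{2}\|y - y_k\|^2\}$. As already noted after \cref{assump:bm}, the aggregation and subgradient conditions imply that $\{\eta_j\}$ is non-decreasing and upper bounded by $\eta^\star := \min_y\{f(y) + \frac{\rho}{2}\|y - y_k\|^2\}$. The crux is to show that every null iteration increases $\eta_j$ by at least an amount comparable to $(1-\beta)^2\rho^3/(M+\rho)^3$ times the residual gap $\eta^\star - \eta_j$. This would be assembled from three ingredients: (i) strong convexity of the proximal subproblem gives $\eta_{j+1} - \eta_j \geq \frac{\rho}{2}\|z_{j+2} - z_{j+1}\|^2$; (ii) $M$-smoothness of $f$, combined with the subgradient piece $f(z_j)+\innerproduct{v_j}{y-z_j}$ inside $\tilde{f}_j$, yields the descent-lemma bound $f(z_{j+1}) - \tilde{f}_j(z_{j+1}) \leq \frac{M}{2}\|z_{j+1} - z_j\|^2$; and (iii) the definition of a null step, rewritten as $f(z_{j+1}) - \tilde{f}_j(z_{j+1}) > (1-\beta)(f(y_k) - \tilde{f}_j(z_{j+1}))$, forces $f(y_k) - \tilde{f}_j(z_{j+1})$ (and thus the gap $\eta^\star - \eta_j$) to be controlled by $\frac{M}{2(1-\beta)}\|z_{j+1}-z_j\|^2$. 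Chaining these inequalities through the optimality conditions that link consecutive subproblem minimizers produces a geometric recursion on $\eta^\star - \eta_j$, whose careful accounting closes the gap within $\frac{16(M+\rho)^3}{(1-\beta)^2\rho^3}$ iterations.

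For the outer bound, each descent step satisfies $f(y_k) - f(y_{k+1}) \geq \beta(f(y_k) - \tilde{f}_j(z_{j+1}))$, and since $\tilde{f}_j \leq f$, the proximal subproblem value $\eta_j$ lower bounds the Moreau-envelope value of $f$ at $y_k$. A standard inexact-PPM telescoping argument (essentially the same as the one underlying the $\bigO(1/k)$ rate of exact PPM in \citet[Theorem~2.1]{guler1991convergence}, with the factor $\beta$ absorbed as a constant) then delivers $f(y_K) - f^\star = \bigO(1/K)$. Hence $K = \bigO(1/\epsilon)$ descent steps suffice, and multiplying by the constant inner bound yields the claimed overall complexity $\bigO(1/\epsilon)$.

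The hardest part is step (iii) in the inner analysis: quantitatively turning a null-step failure into a definite increment of $\eta_{j+1}$. This is where the aggregation condition of \cref{assump:bm} does the real work, since it forces $\tilde{f}_{j+1}$ to dominate the linear function $\tilde{f}_j(z_{j+1}) + \innerproduct{\rho(y_k - z_{j+1})}{\cdot - z_{j+1}}$, which in turn forces $z_{j+2}$ away from $z_{j+1}$ whenever $\tilde{f}_j(z_{j+1})$ is far below $f(z_{j+1})$. Balancing this displacement against the smoothness bound in (ii) is what produces the cubic dependence on $(M+\rho)/\rho$ in the final constant.
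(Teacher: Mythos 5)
First, a framing point: the paper does \emph{not} supply its own proof of \cref{lemma:descent-step}. It imports both the inner bound and the outer $\mathcal{O}(1/\epsilon)$ complexity wholesale from \citet{diaz2023optimal}, so there is no in-paper argument to compare against. Your proposal is therefore judged as a reconstruction of the cited result, not of anything in this manuscript.

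Your high-level decomposition (a constant inner bound on null steps, an outer $\mathcal{O}(1/\epsilon)$ descent-step count, multiply the two) is the correct structure, and tracking the subproblem values $\eta_j$ with monotonicity from the aggregation/subgradient conditions and the cap $\eta_j \leq \eta^\star$ is indeed the engine of the argument in \citet{diaz2023optimal}. The outer telescoping argument, hinging on $\tilde f_j(z_{j+1}) \leq \eta_j \leq \eta^\star$ so that a descent step earns at least $\beta(f(y_k)-\eta^\star)$, is also on target. However, your inner ``crux'' has two real gaps. First, an index misalignment: the strong-convexity/aggregation progress bound reads $\eta_{j+1} - \eta_j \geq \tfrac{\rho}{2}\|z_{j+2}-z_{j+1}\|^2$ (it controls the \emph{next} displacement), while the smoothness-plus-null-step bound controls $\|z_{j+1}-z_j\|^2$ (the \emph{current} displacement). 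You invoke a vague ``chaining through the optimality conditions,'' but the two quantities live at offset indices and must be reconciled explicitly, e.g.\ by shifting the progress inequality to $\eta_j-\eta_{j-1} \geq \tfrac{\rho}{2}\|z_{j+1}-z_j\|^2$ and accepting that this bounds $\eta^\star-\eta_j$ by the progress made \emph{entering} step $j$. Second, your framing that ``every null iteration increases $\eta_j$ by at least $\ldots (\eta^\star-\eta_j)$'' yields only a geometric contraction of the residual gap; a geometric contraction does not by itself bound the null-step count by a constant independent of problem data, since you never establish a floor at which the descent test must trigger. The argument that actually delivers the claimed constant $\tfrac{16(M+\rho)^3}{(1-\beta)^2\rho^3}$ needs either an increment proportional to the \emph{initial} gap $\eta^\star-\eta_1$ (so the count is $1/c$ by additivity) or an explicit threshold tying the test \cref{eq:testing-bundle} to the contracted gap. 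As written, your sketch does not close that loop.

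A minor caveat on your item (ii): the bound $f(z_{j+1}) - \tilde f_j(z_{j+1}) \leq \tfrac{M}{2}\|z_{j+1}-z_j\|^2$ relies specifically on \cref{assump:bm}(2), the cut at $z_j$; there is no guarantee any cut at the center $y_k$ survives into $\tilde f_j$, so your reference point must be the most recent trial iterate. You used the right point, but it is worth being explicit that this is why (ii) holds, because the natural temptation is to try to use a cut at $y_k$.
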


\vspace{-2mm}

    The PBM described in \cref{alg:bundle} is presented as a double-loop algorithm. Historically, however, the PBM is often formulated in a single-loop form; see, e.g., \citet[Algorithm 2.1]{kiwiel2000efficiency} and \citet[Algorithm 1]{diaz2023optimal}. These two formulations are in fact equivalent; see \citet[Section 4.3]{liao2025proximal} or \cref{subsec:reinterpretation}. The double-loop form in \cref{alg:bundle} will prove to be particularly convenient for developing the accelerated PBM in this work.

\subsection{Problem statement}
The $\bigO(1/\epsilon)$ complexity of PBM established in \cref{lemma:descent-step} matches that of the exact PPM (\citet[Theorem 2.1]{guler1991convergence}) and vanilla gradient descent with a constant stepsize (\citet[Corollary 2.1.2]{nesterov2018lectures}). However, this $\bigO(1/\epsilon)$ rate is not optimal for smooth convex functions. For this class, it is well-known that gradient descent with momentum (commonly referred to as \textit{accelerated gradient descent} \citep{nesterov1983method}) achieves the optimal rate $\bigO(1/\sqrt{\epsilon})$. Similarly, the PPM can be accelerated to achieve the same optimal rate, with the first such result due to \citep{doi:10.1137/0802032}. Building on this foundation, several accelerated \emph{inexact} PPMs have been developed in \citep{he2012accelerated, monteiro2013accelerated, salzo2012inexact, lin2018catalyst}.

In this work, we aim to develop an accelerated PBM that attains the optimal convergence rate $\mathcal{O}(1/\sqrt{\epsilon})$ for smooth convex problems.
The proposed method will rely only on the standard bundle assumptions in \Cref{assump:bm} and the conventional descent criterion \cref{eq:testing-bundle}, without requiring any additional bundle modifications.
Our key observation is that the inexact (sub)gradient update \cref{eq:PPM-gradient-inexact} and the double-loop structure in \cref{alg:bundle} can naturally incorporate Nesterov’s momentum mechanism.
Importantly, the resulting accelerated PBM can be interpreted as a particular instance of the accelerated inexact proximal point framework introduced by \cite{monteiro2013accelerated}.

\section{An accelerated proximal bundle method}
\label{sec:ACC-PBM}
This section introduces our accelerated PBM that integrates Nesterov’s extrapolation mechanism. We present its convergence guarantees and identify an interesting connection with Nesterov’s~AGD.

\subsection{Integrating Nesterov's extrapolation with PBM}
\label{subsec:integration}

Among various acceleration techniques for first-order methods, the most fundamental and influential is \emph{Nesterov's accelerated gradient descent} (AGD) \citep{nesterov1983method, nesterov2018lectures}. Note that Nesterov's AGD has multiple equivalent formulations (see e.g., \citet[Section 4.4]{d2021acceleration}). We list a particular Nesterov's AGD in \Cref{alg:NGD}.

Unlike the standard gradient descent \cref{eq:gradient-update}, Nesterov's AGD maintains three coupled sequences of iterates, denoted by $\{x_k\}$, $\{y_k\}$, and $\{z_k\}$. 
At each iteration, it performs a gradient step at the extrapolated point $y_k$ (see line \ref{alg:AGD-gradient-step} in \Cref{alg:NGD}), followed by a momentum-based extrapolation to generate the next iterate $y_{k+1}$ in lines \ref{alg:AGD-extrapolation-1} and \ref{alg:AGD-extrapolation-2}.   
This additional extrapolation step introduces a form of momentum that substantially accelerates the convergence performance. Indeed,  \Cref{alg:NGD} improves the iteration complexity from $\mathcal{O}(1/\epsilon)$ to $\mathcal{O}(1/\sqrt{\epsilon})$ in achieving an $\epsilon$-accurate solution for smooth convex functions \cite[Theorem 2.17]{nesterov2018lectures}.

Our key observation is that the PBM can be interpreted as an inexact (sub)gradient update \cref{eq:PPM-gradient-inexact}. It is then possible to integrate this Nesterov’s extrapolation mechanism within the standard PBM framework. 
In particular, leveraging the double-loop structure in \cref{alg:bundle}, we propose the accelerated PBM listed in \cref{alg:A-PBM-main}. Notably, \cref{alg:NGD,alg:A-PBM-main} share the same extrapolation and momentum updates; they differ only in the sixth line, where the exact gradient step in AGD 
$
x_{k+1} = y_k - \nabla f(y_k)/M
$ is replaced by the PBM oracle 
$$x_{k+1} = \text{\ProDes}(y_k,\beta,\rho).$$
This oracle corresponds to the inexact proximal step described in \cref{eq:PPM-gradient-inexact} and ensures the descent and model-approximation conditions defined in \cref{eq:testing-bundle} and \cref{assump:bm}.

The proposed accelerated PBM can be viewed as an accelerated inexact PPM realized through standard bundle updates, which preserves the classical PBM structure. We show next that \Cref{alg:A-PBM-main} not only achieves the optimal $\mathcal{O}(1/\sqrt{\epsilon})$ convergence rate for smooth convex problems, but also naturally includes \Cref{alg:NGD} as a special case when choosing the proximal parameter $\rho$ and descent parameter $\beta$ appropriately.

\hspace{3mm}
\begin{minipage}{0.4\textwidth}
 % \scalebox{0.76}{
    \begin{algorithm}[H]
\caption{Nesterov's AGD}
\label{alg:NGD}
\begin{algorithmic}[1]
% \small
\STATE Set $z_0 = x_0$ and $A_0 = 0$.
\FOR{$k=0,1, \ldots, T$} 
     \STATE  $a_{k} = {(1 + \sqrt{1 + 4 A_k})}/{2}$
    \STATE $A_{k+1} = A_k + a_{k }$ 
    \STATE $y_k = \frac{A_k}{A_{k+1}} x_k + \frac{a_{k}}{A_{k+1}} z_k$ \label{alg:AGD-extrapolation-1}
    \STATE $x_{k+1} =  y_k \!-\! \nabla f(y_k)/M$ \label{alg:AGD-gradient-step}
    \State $z_{k+1} = z_k\! -\! a_k (y_k\! -\! x_{k+1})$ \label{alg:AGD-extrapolation-2}
\ENDFOR
\end{algorithmic}
\end{algorithm}
% }
\end{minipage}
\hfill 
\begin{minipage}{0.43\textwidth}

\begin{algorithm}[H]
\caption{Accelerated PBM}
\label{alg:A-PBM-main}
\begin{algorithmic}[1]
\STATE Set $z_0 = x_0, A_0 = 0$ 
\FOR{$k=0,1, \ldots, T$} 
 \STATE $a_{k } = {(1 + \sqrt{1 + 4 A_k })}/{2}$ 
    \STATE $A_{k+1} = A_k + a_{k }$  
    \STATE $y_k = \frac{A_k}{A_{k+1}} x_k + \frac{a_{k}}{A_{k+1}} z_k$ 
    \STATE $x_{k+1} =  $ 
    \ProDes($y_k,\beta,\rho$)
    \STATE $z_{k+1} = z_k \!-\! a_k (y_k\! -\! x_{k+1})$ 
\ENDFOR
\end{algorithmic}
\end{algorithm}
\end{minipage}
\hspace{5mm}

\vspace{4mm}

\subsection{Accelerated rate for smooth convex problems}
\label{subsec:ACC-PBM}

We now present our main convergence guarantee. Similar to Nesterov's AGD, our accelerated PBM also achieves the $\bigO(1/\sqrt{\epsilon})$ optimal iteration complexity for smooth convex problems.
\begin{thm}
    \label{thm:A-PBM}
    Consider the problem \cref{eq:opt-pb}.  Let $S =  \argmin_{x} f(x)$, and assume $S \neq \emptyset$. If $f$ is convex and $M$-smooth, then \Cref{alg:A-PBM-main} with parameters $\rho \geq \frac{M}{c}$ for some $c > 0$,  and $\beta \in [ \frac{c+2\sqrt{c} +2}{c + 2\sqrt{c} + 3} ,1)$ generates a sequence of iterates $\{x_k\}$ satisfying
    \begin{align} \label{eq:PBM-acceleration}
        f(x_k) - f^\star \leq \frac{2\rho\Dist^2(x_0,S)}{k^2}, \; \forall k \geq 1.
    \end{align}
    Moreover, the total iteration count (including the inner loop) required to obtain an $\epsilon$-optimal solution, i.e., a point $x_k$ such that $f(x_k) - f^\star \le \epsilon$, is bounded by 
    \begin{equation}  \label{eq:PBM-acceleration-complexity}
        \frac{16(M+\rho)^3}{(1-\beta)^2 \rho^3} \sqrt{\frac{2\rho \cdot \Dist^2(x_0,S)}{\epsilon}}. 
    \end{equation}
\end{thm}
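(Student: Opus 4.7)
The plan is to recognize that the inner loop of \cref{alg:A-PBM-main} produces an inexact proximal step, then follow a Nesterov-style potential-function argument that closely parallels the standard proof of AGD, with the inexactness controlled via the PBM descent criterion. First, I would reinterpret the output $x_{k+1} = \texttt{ProxDescent}(y_k,\beta,\rho)$ as an inexact implicit subgradient step. From the optimality of $x_{k+1}$ in the final subproblem \cref{eq:PBM-trial-point}, one obtains $\tilde{g}_{k+1} := \rho(y_k - x_{k+1}) \in \partial \tilde{f}_j(x_{k+1})$. Since $\tilde{f}_j \leq f$ (Assumption \ref{assump:bm}, item~1), the slack $\epsilon_{k+1} := f(x_{k+1}) - \tilde{f}_j(x_{k+1}) \geq 0$ satisfies $\tilde{g}_{k+1} \in \partial_{\epsilon_{k+1}} f(x_{k+1})$, and the descent criterion \cref{eq:testing-bundle} yields $\epsilon_{k+1} \leq \tfrac{1-\beta}{\beta}(f(y_k) - f(x_{k+1}))$. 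Rewriting the $\epsilon$-subgradient inequality via the three-point identity produces the key estimate:
\begin{equation*}
  f(y) + \tfrac{\rho}{2}\|y - y_k\|^2 \;\geq\; f(x_{k+1}) + \tfrac{\rho}{2}\|y_k - x_{k+1}\|^2 + \tfrac{\rho}{2}\|y - x_{k+1}\|^2 - \epsilon_{k+1}, \quad \forall y \in \RR^n.
\end{equation*}

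Second, I would set up a potential $\Phi_k := A_k(f(x_k) - f^\star) + \tfrac{\rho}{2}\|z_k - x^\star\|^2$ with $x^\star \in S$ and prove it is non-increasing. Applying the above inequality at $y=x_k$ and $y=x^\star$ with weights $A_k$ and $a_k$, then using the algorithmic identities $A_{k+1} = A_k + a_k$, $a_k^2 = A_{k+1}$ (which follow from $a_k = (1+\sqrt{1+4A_k})/2$), $A_{k+1} y_k = A_k x_k + a_k z_k$, and $z_{k+1} = z_k - a_k(y_k - x_{k+1})$, the standard convex-combination identity $\alpha\|u-w\|^2 + \beta\|v-w\|^2 = (\alpha+\beta)\|\tfrac{\alpha u + \beta v}{\alpha+\beta} - w\|^2 + \tfrac{\alpha\beta}{\alpha+\beta}\|u-v\|^2$ collapses all cross terms and gives
\begin{equation*}
  \Phi_{k+1} - \Phi_k \;\leq\; -\tfrac{\rho A_{k+1}}{2}\|x_{k+1} - y_k\|^2 + A_{k+1}\epsilon_{k+1}.
\end{equation*}

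Third, and this is where I expect the real work, I would show that under smoothness and $\rho \geq M$ the residual satisfies $\epsilon_{k+1} \leq \tfrac{\rho}{2}\|x_{k+1} - y_k\|^2$, which cancels the error term and yields $\Phi_{k+1} \leq \Phi_k$. When the inner loop terminates at $j=1$, the under-estimator $\tilde{f}_1$ is just the linearization of $f$ at $y_k$, and the standard smooth descent lemma gives $\epsilon_{k+1} = f(x_{k+1}) - \tilde{f}_1(x_{k+1}) \leq \tfrac{M}{2}\|x_{k+1} - y_k\|^2 \leq \tfrac{\rho}{2}\|x_{k+1} - y_k\|^2$. The hard part is extending this to $j>1$: here I would exploit the monotonicity of the subproblem values $\eta_j$ (a consequence of the subgradient and aggregation requirements in \cref{assump:bm}) together with the upper bound $\eta_j \leq \eta^\star \leq f(y_k)$, to show that the aggregation transmits a smoothness-compatible lower bound on $\tilde{f}_j(x_{k+1})$ through every inner step. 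This is the crucial technical link between the bundle assumptions and the Monteiro--Svaiter-type HPE condition $\epsilon_{k+1} \leq \tfrac{\sigma^2 \rho}{2}\|x_{k+1}-y_k\|^2$ with $\sigma \leq 1$.

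Once $\Phi_{k+1} \leq \Phi_k$ is in hand, the proof concludes quickly. Telescoping gives $A_K(f(x_K) - f^\star) \leq \Phi_0 = \tfrac{\rho}{2}\|x_0 - x^\star\|^2$, and a routine induction on the recursion $\sqrt{A_{k+1}} = a_k \geq \sqrt{A_k} + \tfrac{1}{2}$ yields $A_K \geq K^2/4$; minimizing over $x^\star \in S$ gives \cref{eq:PBM-acceleration}. For the total iteration complexity \cref{eq:PBM-acceleration-complexity}, I would invert \cref{eq:PBM-acceleration} to see that $K = \lceil\sqrt{2\rho \Dist^2(x_0,S)/\epsilon}\rceil$ outer iterations suffice, then multiply by the per-call inner bound $16(M+\rho)^3/((1-\beta)^2\rho^3)$ supplied by \cref{lemma:descent-step}.
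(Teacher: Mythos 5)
Your overall strategy matches the paper's: reinterpret $\texttt{ProxDescent}(y_k,\beta,\rho)$ as an inexact implicit subgradient step, verify the Monteiro--Svaiter condition $\epsilon_{k+1}\le\frac{\rho}{2}\|x_{k+1}-y_k\|^2$, and drive a Nesterov-style potential $\Phi_k = A_k(f(x_k)-f^\star) + \frac{\rho}{2}\|z_k - x^\star\|^2$ (the potential-function route is precisely \cref{section:potential}; the paper also gives the equivalent $\beta_k$-sequence argument from \citet{monteiro2013accelerated} in \cref{sec:A-HPE-simplified-conv}). Your recursion $\Phi_{k+1}-\Phi_k \le -\frac{\rho A_{k+1}}{2}\|x_{k+1}-y_k\|^2 + A_{k+1}\epsilon_{k+1}$, the bound $A_k\ge k^2/4$, and the $j=1$ case of the HPE condition are all correct.

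The gap is exactly where you flag ``the hard part.'' For inner iterations $j>1$ you only offer a plan, and the plan does not obviously work: monotonicity of the subproblem values $\eta_j$ gives $\frac{\rho}{2}\|z_{j+1}-z_j\|^2 \le \eta_j-\eta_{j-1}$ and the ceiling $\eta_j\le f(y_k)$, i.e., control of $\|z_{j+1}-z_j\|$ through the aggregate gap, whereas the HPE condition needs control of $f(z_{j+1})-\tilde f_j(z_{j+1})$ by $\frac{\rho}{2}\|z_{j+1}-y_k\|^2$; bridging these two is the whole difficulty, and your sketch does not close it. The paper's \cref{prop:PMB-AHPE} does \emph{not} go through $\eta_j$-monotonicity at all: it combines the $M$-smoothness descent lemma centered at $z_j$ with item~2 of \cref{assump:bm} (the tangent-type lower bound $\tilde f_j(y)\ge f(z_j)+\langle\nabla f(z_j), y-z_j\rangle$) to bound $f(z_{j+1})-\tilde f_j(z_{j+1})$ by $\frac{\rho}{2}\|z_{j+1}-y_k\|^2$ for every $j\ge1$ at once, so the claim applies to whichever inner iterate the descent test finally accepts. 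Without this step spelled out, the cancellation $\Phi_{k+1}\le\Phi_k$ does not follow. A minor further point: the bound $\epsilon_{k+1}\le\tfrac{1-\beta}{\beta}(f(y_k)-f(x_{k+1}))$ you extract from \cref{eq:testing-bundle} is correct but is a red herring for this theorem --- the descent test's only role in the paper is to guarantee, via \cref{lemma:descent-step}, the per-call bound on inner iterations appearing in \cref{eq:PBM-acceleration-complexity}, not to supply the HPE inexactness control.
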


The proof builds on the interpretation of the PBM update as an inexact (sub)gradient step \cref{eq:PPM-gradient-inexact}, realized by the subroutine \ProDes($y_k,\beta,\rho$).  In particular, we show that the bundle-model assumptions in \cref{assump:bm} and the descent test \cref{eq:testing-bundle} jointly control the inexactness parameter $\epsilon_{k+1}$ in \cref{eq:PPM-gradient-inexact}. This connection allows us to interpret our accelerated PBM in \Cref{alg:A-PBM-main} as a special realization of the general accelerated inexact proximal point framework in \cite{monteiro2013accelerated}. 
Consequently, the accelerated rate in \cref{eq:PBM-acceleration} follows directly from \citet[Theorem~3.8]{monteiro2013accelerated}.  
The total iteration bound in \cref{eq:PBM-acceleration-complexity} then follows from \Cref{lemma:descent-step}, which ensures that each call to \ProDes($y_k,\beta,\rho$) terminates after a constant number of inner iterations. The proof details are given in \cref{sec:proofs}.

Although the convergence proof builds upon the general inexact acceleration framework in \cite{monteiro2013accelerated}, we believe the underlying observation is nontrivial.  
The possibility of accelerating PBM had remained open in the literature \citep{diaz2023optimal, liang2024unified}. 
To our best knowledge, \cref{alg:A-PBM-main} provides the first accelerated PBM that achieves the optimal convergence rate for smooth convex functions. In the past, PBMs have been primarily studied for nonsmooth optimization problems, and this reflects their origins in cutting-plane and bundle methods. Only recently, an $\bigO(1/\epsilon)$ iteration complexity of PBM for smooth functions was established in \cite{diaz2023optimal}. 
The result in \Cref{thm:A-PBM} advances this line of work by improving the rate from $\mathcal{O}(1/\epsilon)$ to $\mathcal{O}(1/\sqrt{\epsilon})$ through the incorporation of Nesterov-type momentum and extrapolation, paralleling the developments of Nesterov’s AGD and the accelerated PPM.

We note that Nesterov's AGD in \cref{alg:NGD} always requires the small stepsize choice $1/M$. In contrast, as guaranteed in \cref{thm:A-PBM}, the accelerated PBM in \cref{alg:A-PBM-main} allows a larger stepsize $1/\rho$ since $\rho > 0$ can be chosen arbitrarily, as long as we choose an appropriate $\beta$ to control the solution quality of a descent step.

\vspace{-2mm}

\begin{rem} 
\label{remark:comparison}
    Here, we compare our accelerated PBM (\Cref{alg:A-PBM-main}) with the recent development \citet[Algorithm 3]{fersztand2025acceleration}. First, our \Cref{alg:A-PBM-main} does not change the descent criterion for the PBM, whereas \cite{fersztand2025acceleration} proposes to use a more stringent testing criterion. Second, our \cref{assump:bm} for the under-estimators $\tilde{f}_j$ are standard in the classical PBM framework. In contrast, \cite{fersztand2025acceleration} requires a \textit{smooth} under-estimator, which is not satisfied by the usual cutting-plane approximation (e.g. $\tilde{f}_{j}(y) = \max_{i = 1,\ldots,j} \{f(y_i) + \innerproduct{\nabla f(y_i)}{y - y_i}\}$). Third, our result is more general as the under-estimator assumption in \cite{fersztand2025acceleration} naturally satisfies our \cref{assump:bm}. As a result, our algorithm enables the use of finite-memory models, where the difficulty of performing a null step is limited by a (sub)gradient evaluation. Finally, our $\bigO(1/\sqrt{\epsilon})$ iteration complexity in \cref{thm:A-PBM} is better than $\bigO(\log(1/\epsilon)/\sqrt{\epsilon})$ in \citet[Theorem 26]{fersztand2025acceleration} by a logarithmic term.
\end{rem}

\subsection{Connections with Nesterov's AGD}

Finally, we point out an interesting connection with Nesterov's AGD.
For smooth convex functions, the oracle \ProDes($y_k,\beta,\rho$) can be tuned to produce an iterate in a single inner iteration when the parameters $\beta$ and $\rho$ are chosen appropriately. Similar observations have been made in \citep{ding2023revisiting,liao2025overview} in the context of applying PBM to solve semidefinite programs.

\begin{prop}
    \label{prop:connection-NGD}
    Let $f:\RR^n \to \RR$ be a $M$-smooth function. Given a center point $y_k \in \RR^n$, suppose that we choose $\beta \in (0, \frac{1}{2}]$ and $\rho \geq M$ in \Cref{alg:Proxi-descent-subproblem}. Then, \Cref{alg:Proxi-descent-subproblem} terminates in one iteration.  
\end{prop}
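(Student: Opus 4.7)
The plan is to show that the very first candidate $z_2$ produced by \Cref{alg:Proxi-descent-subproblem} already passes the descent test \cref{eq:testing-bundle}, by exploiting only the structural properties of $\tilde f_1$ guaranteed by \cref{assump:bm} together with $M$-smoothness of $f$. Three ingredients will be combined: (i) the identity $\tilde f_1(y_k)=f(y_k)$, (ii) $\rho$-strong convexity of the proximal subproblem, and (iii) the quadratic upper bound on $f$ at $z_2$ obtained from smoothness.

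First, since $z_1=y_k$, parts 1 and 2 of \cref{assump:bm} together pin down $\tilde f_1(y_k)=f(y_k)$: the lower-approximation condition gives $\tilde f_1(y_k)\leq f(y_k)$, while evaluating the subgradient lower bound at $y=y_k$ gives $\tilde f_1(y_k)\geq f(z_1)=f(y_k)$. Next, since $z_2$ minimizes the $\rho$-strongly convex objective $\tilde f_1(y)+\tfrac{\rho}{2}\|y-y_k\|^2$, strong convexity applied at $y=y_k$ yields
\[
\tilde f_1(y_k)\;\geq\;\tilde f_1(z_2)+\tfrac{\rho}{2}\|z_2-y_k\|^2+\tfrac{\rho}{2}\|y_k-z_2\|^2,
\]
which, combined with $\tilde f_1(y_k)=f(y_k)$, gives the key lower bound
\[
f(y_k)-\tilde f_1(z_2)\;\geq\;\rho\,\|z_2-y_k\|^2.
\]

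The second half of the plan invokes smoothness of $f$. By $M$-smoothness and since $v_1=\nabla f(y_k)$ in the smooth case, I would write
\[
f(z_2)\;\leq\;f(y_k)+\langle\nabla f(y_k),z_2-y_k\rangle+\tfrac{M}{2}\|z_2-y_k\|^2\;\leq\;\tilde f_1(z_2)+\tfrac{M}{2}\|z_2-y_k\|^2,
\]
where the last inequality uses the subgradient lower bound in \cref{assump:bm}(2). Rearranging,
\[
f(y_k)-f(z_2)\;\geq\;\bigl(f(y_k)-\tilde f_1(z_2)\bigr)-\tfrac{M}{2}\|z_2-y_k\|^2.
\]

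To conclude, I would show that the descent criterion $f(y_k)-f(z_2)\geq\beta\bigl(f(y_k)-\tilde f_1(z_2)\bigr)$ reduces, after substitution, to the sufficient condition
\[
(1-\beta)\bigl(f(y_k)-\tilde f_1(z_2)\bigr)\;\geq\;\tfrac{M}{2}\|z_2-y_k\|^2,
\]
and then apply the strong-convexity bound above to replace the left-hand side by $(1-\beta)\rho\|z_2-y_k\|^2$. The remaining numerical inequality $(1-\beta)\rho\geq M/2$ is immediate from $\beta\leq 1/2$ and $\rho\geq M$, so the test passes and \Cref{alg:Proxi-descent-subproblem} exits. I do not expect any serious obstacle; the only subtle point is to make sure the strong-convexity inequality is applied between $y_k$ and the unique minimizer $z_2$ (yielding the clean factor $\rho\|z_2-y_k\|^2$ rather than $\tfrac{\rho}{2}\|z_2-y_k\|^2$), and to recognize that $\tilde f_1$ at $z_2$ lies sandwiched between the gradient linearization and $f(z_2)-\tfrac{M}{2}\|z_2-y_k\|^2$, so that the descent shortfall caused by passing from model to true objective is controlled exactly by the quadratic smoothness term.
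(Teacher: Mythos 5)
Your argument is correct and, in fact, slightly cleaner and more self-contained than the paper's. The paper invokes \citet[Theorem 8]{liao2025bundle} as a black box to obtain the sandwich $0\le f(z_2)-\tilde f_1(z_2)\le f(y_k)-f(z_2)$, and then finishes with a three-line manipulation (add and subtract $f(z_2)$, apply the sandwich, and use $\beta\le 1/2$). You instead derive everything from first principles: the $\rho$-strong convexity of the proximal subproblem at its minimizer $z_2$ (tested against the point $y_k$, where $\tilde f_1(y_k)=f(y_k)$ by \cref{assump:bm}(1)--(2)) gives the model-decrease lower bound $f(y_k)-\tilde f_1(z_2)\ge\rho\|z_2-y_k\|^2$, and the $M$-smoothness upper bound at $y_k$ combined with \cref{assump:bm}(2) gives $f(z_2)-\tilde f_1(z_2)\le\frac{M}{2}\|z_2-y_k\|^2$; these combine to give the sufficient condition $(1-\beta)\rho\ge M/2$, which holds when $\beta\le\tfrac12$ and $\rho\ge M$. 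The two proofs are mathematically close in substance (the cited Theorem 8 is proved with comparable tools), but your route avoids external citation and makes the role of each hypothesis explicit: strong convexity of the subproblem quantifies the predicted decrease, smoothness caps the model error at $z_2$, and $\beta\le\tfrac12$, $\rho\ge M$ balance the two. The one small point worth tightening in a written version is to state clearly that you use strong convexity of $y\mapsto\tilde f_1(y)+\tfrac{\rho}{2}\|y-y_k\|^2$ at the \emph{minimizer} $z_2$ (i.e., $\phi(y)\ge\phi(z_2)+\tfrac{\rho}{2}\|y-z_2\|^2$), which you already flag as the subtle step; otherwise the argument is complete.
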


Due to the page limit, the proof is provided in \cref{subsec:connection-NGD-pf}.  \cref{prop:connection-NGD} implies that if the first under-estimator is constructed as $\tilde{f}_1(\cdot) = f(y_k) + \innerproduct{\nabla f(y_k)}{\cdot-y_k}$ and parameters are chosen as $\beta\in (0,1/2]$ and $\rho \geq M$, then \cref{alg:Proxi-descent-subproblem} will directly output the usual gradient update 
$$z_2 = y_k - \textstyle \frac{1}{\rho}\nabla f(y_k)$$ 
since $z_{2} = \argmin_{y}\{ \tilde{f}_{1}(y) + \frac{\rho}{2}\|y - y_k\|^2 \}$. In this case, our \cref{alg:A-PBM-main} reduces exactly to Nesterov's AGD in \cref{alg:NGD}. When choosing $\beta \in (1/2,1)$, the oracle \ProDes($y_k,\beta,\rho$) may take more (but a constant number of) gradient-like iterations before the extrapolation step.

In this sense, our proposed \cref{alg:A-PBM-main} gracefully generalizes Nesterov's AGD by allowing multiple gradient-like updates within each proximal step before applying the extrapolation mechanism.
Such a tunable integration of multiple gradient-like iterations within a Nesterov-type scheme seems not to have been explicitly established before. 

\vspace{-2mm}

\section{Technical proofs}
\label{sec:proofs}

In this section, we prove the accelerated convergence of our proposed \cref{alg:A-PBM-main}, stated in \cref{thm:A-PBM}. In particular, we show that \cref{alg:A-PBM-main} can be viewed as a special realization of the accelerated hybrid proximal extragradient (A-HPE) in \cite{monteiro2013accelerated}.

\vspace{-1mm}
\subsection{A simplified A-HPE algorithm}

\begin{wrapfigure}[10]{r}{0.48\textwidth}
\vspace{-11mm}                % optional: tighten vertical spacing
\begin{minipage}{0.48\textwidth}
\begin{algorithm}[H]
\caption{Simplified~A-HPE}
\label{alg:A-HPE-simplified}
\begin{algorithmic}[1]
% \small
\STATE Set $z_0 = x_0, A_0 = 0, \rho > 0$.
\FOR{$k=0,1, \ldots, T$} 
 \STATE $a_{k } = \frac{1 + \sqrt{1 + 4 A_k }}{2}$ 
    \STATE $A_{k+1} = A_k + a_{k }$  
    \STATE $y_k = \frac{A_k}{A_{k+1}} x_k + \frac{a_{k}}{A_{k+1}} z_k$ 
    \STATE $x_{k+1} \!=\! y_k \!- g_{k + 1}/\rho$ satisfying \cref{eq:A-HPE-condition} \label{alg:acceleration-gradient}
    \STATE $z_{k+1} = z_k \!- \! a_k (y_k\! -\! x_{k+1})$ 
\ENDFOR
\end{algorithmic}
\end{algorithm}
\end{minipage}
\end{wrapfigure}

Let us first review a simplified and reformulated version of A-HPE, listed in \cref{alg:A-HPE-simplified}. We note that the update of \cref{alg:A-HPE-simplified} is slightly different from the original version in \cite{monteiro2013accelerated}. We believe this reformulated version may provide a clearer picture of the connection with Nesterov's AGD.

The only difference between \cref{alg:A-HPE-simplified,alg:NGD} is the update in the sixth line. In \cref{alg:A-HPE-simplified}, instead of restricting to a simple gradient update, \cref{alg:A-HPE-simplified} uses an update direction $g_{k + 1} \in \RR^n$ with an inexactness $\epsilon_{k+1} \geq 0$ such that 
\begin{align}
    \label{eq:A-HPE-condition}
    x_{k+1} = y_k - \frac{1}{\rho} g_{k + 1}, \;\; g_{k + 1} \in \partial_{\epsilon_{k+1}}f(x_{k+1}),  \;\; 2 \epsilon_{k+1} \leq \rho \| x_{k+1} - y_k\|^2.
\end{align}
This scheme can be viewed as an inexact PPM update, as the inexactness $\epsilon_{k+1}$ is expressed in terms of the subdifferential at the next iterate. Below, we state the convergence of \cref{alg:A-HPE-simplified}.

\vspace{-2mm}
\begin{thm}{\citet[Theorem 3.8]{monteiro2013accelerated}} \label{thm:monteiro-conv-main} Let $f:\RR^n \to \RR$ be a convex function and set $S =  \argmin_{x} f(x)$. Assume $S \neq \emptyset$. The sequence $\{x_k\}$ in \cref{alg:A-HPE-simplified} satisfies
    \begin{equation*}
        f(x_k) - \min_{x} f(x) \leq \frac{2 \rho \cdot \Dist(x_0, S)^2}{k^2}, \; \forall k\geq 1.
    \end{equation*}
\end{thm}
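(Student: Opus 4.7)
The plan is to prove \cref{thm:monteiro-conv-main} by a standard Lyapunov (estimate-sequence) argument, which is by now the textbook route to Nesterov-type acceleration in the inexact setting. Fixing an arbitrary $x^\star \in S$, I would define the potential
$$\Phi_k := A_k\bigl(f(x_k) - f^\star\bigr) + \frac{\rho}{2}\|z_k - x^\star\|^2,$$
and aim to show the monotonicity $\Phi_{k+1} \le \Phi_k$ for every $k \ge 0$. Combined with the initialization $\Phi_0 = \frac{\rho}{2}\|x_0 - x^\star\|^2$ (since $A_0 = 0$ and $z_0 = x_0$) and a lower bound $A_k \ge k^2/4$, this yields $f(x_k) - f^\star \le 2\rho\|x_0 - x^\star\|^2/k^2$; taking the infimum over $x^\star \in S$ then gives the claimed bound in terms of $\Dist^2(x_0, S)$.

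To establish monotonicity, I would exploit three structural facts about \cref{alg:A-HPE-simplified}: (i) the quadratic identity $a_k^2 = A_{k+1}$, which follows directly from the closed form of $a_k$; (ii) the extrapolation identity $A_{k+1} y_k = A_k x_k + a_k z_k$, coming from the definition of $y_k$; and (iii) the HPE step $x_{k+1} = y_k - v_k/\rho$ with $v_k \in \partial_{\epsilon_{k+1}} f(x_{k+1})$ and the relative-error bound $2\epsilon_{k+1} \le \rho\|x_{k+1} - y_k\|^2 = \|v_k\|^2/\rho$. Starting from $z_{k+1} = z_k - (a_k/\rho) v_k$, expanding the squared distance gives
$$\frac{\rho}{2}\|z_{k+1} - x^\star\|^2 = \frac{\rho}{2}\|z_k - x^\star\|^2 - a_k\langle v_k, z_k - x^\star\rangle + \frac{a_k^2}{2\rho}\|v_k\|^2.$$
Next, applying the $\epsilon_{k+1}$-subdifferential inequality for $v_k$ at the two test points $x_k$ and $x^\star$ with weights $A_k$ and $a_k$ and summing yields
$$A_{k+1} f(x_{k+1}) - A_k f(x_k) - a_k f^\star \le \langle v_k,\; A_{k+1} x_{k+1} - A_k x_k - a_k x^\star\rangle + A_{k+1}\epsilon_{k+1}.$$
Substituting $A_{k+1} x_{k+1} = A_{k+1} y_k - (A_{k+1}/\rho) v_k$ together with identity (ii) collapses the inner product to $-a_k\langle v_k, x^\star - z_k\rangle - (A_{k+1}/\rho)\|v_k\|^2$, so the $a_k\langle v_k, z_k - x^\star\rangle$ cross term cancels exactly against the one produced by the distance expansion.

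Combining the two displays and using $a_k^2 = A_{k+1}$ leaves
$$\Phi_{k+1} - \Phi_k \le A_{k+1}\Bigl(\epsilon_{k+1} - \tfrac{1}{2\rho}\|v_k\|^2\Bigr),$$
and the HPE tolerance in (iii) makes the right-hand side nonpositive. For the growth of $A_k$, I would rewrite $a_k^2 = A_{k+1}$ as $\sqrt{A_{k+1}} - \sqrt{A_k} = \sqrt{A_{k+1}}/(\sqrt{A_{k+1}} + \sqrt{A_k}) \ge 1/2$ and telescope to obtain $\sqrt{A_k} \ge k/2$, i.e.\ $A_k \ge k^2/4$.

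The main obstacle, in my view, is organizing the algebra so that identities (i) and (ii) conspire to annihilate the cross term $\langle v_k, z_k - x^\star\rangle$ and to match the $\|v_k\|^2$ coefficients. This is the precise sense in which the HPE tolerance $2\epsilon_{k+1} \le \rho\|x_{k+1} - y_k\|^2$ is \emph{tight} for acceleration: a weaker tolerance would leave a residual positive multiple of $\|v_k\|^2$ in $\Phi_{k+1} - \Phi_k$ and destroy the Lyapunov argument. Once this cancellation is identified, the remaining work is a routine manipulation of the $\epsilon$-subdifferential definition together with an elementary induction on $A_k$.
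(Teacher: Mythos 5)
Your proposal is correct and coincides, essentially line by line, with the paper's alternative potential-function proof in \cref{section:potential} (\cref{lemma:potential} and \cref{lemma:key-identity}): same Lyapunov function $\Phi_k = A_k(f(x_k)-f^\star) + \tfrac{\rho}{2}\|z_k - x^\star\|^2$, same weighted pair of $\epsilon$-subdifferential inequalities, same use of $a_k^2 = A_{k+1}$ and the extrapolation identity to cancel the cross term, and same growth bound $A_k \ge k^2/4$. Note that the paper also gives a second, logically distinct proof in \cref{sec:A-HPE-simplified-conv} via the estimate-sequence machinery of \citet{monteiro2013accelerated} (the affine lower models $\Gamma_k$ and the nondecreasing slack $\beta_k$); that route is what the theorem's citation refers to, but your potential-function argument is equivalent and arguably more direct, since it avoids constructing the auxiliary model functions.
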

For self-completeness, we provide the proof of \cref{thm:monteiro-conv-main} in \cref{sec:A-HPE-simplified-conv}. An alternative proof using a potential function argument is provided in \cref{section:potential}. Note that \cref{thm:monteiro-conv-main} holds for both smooth and nonsmooth convex functions. However, for a nonsmooth function, condition \cref{eq:A-HPE-condition} may be difficult to satisfy unless a strong oracle is assumed, which inevitably requires more computation.

\subsection{Proof of \Cref{thm:A-PBM}}

We now establish the convergence of \cref{alg:A-PBM-main}. We only need to show that the iterate $x_{k+1} =$  \ProDes($y_k,\beta,\rho$) satisfies \cref{eq:A-HPE-condition}. Recall that \ProDes($y_k,\beta,\rho$) is realized by repeating 
\begin{align}
    \label{eq:subproblem}
    z_{j+1} = \argmin_{y}\; \left\{\tilde{f}_{j}(y) + \frac{\rho}{2}\|y - y_k\|^2 \right\}
\end{align}
until a point $z_{j+1}$ satisfies \cref{eq:testing-bundle}. Our first observation is that $z_{j+1}$ can be rewritten in the form of \cref{eq:A-HPE-condition}.
\begin{prop}
    \label{prop:inexact-subgrad}
    Let $f : \RR^n \to \RR$ be convex. 
    For every $j\geq 1$ iteration of~\ProDes($y_k,\beta,\rho$) (i.e. \cref{alg:Proxi-descent-subproblem}) with $\rho > 0$ and $\beta \in (0,1)$, it holds that $ z_{j+1} = y_k - \frac{1}{\rho} \tilde{g}_{j + 1}, \; \tilde{g}_{j + 1} \in \partial f_{\tilde{\epsilon}_{j+1}}(z_{j+1})$ with the inexactness satisfying $   \tilde{\epsilon}_{j+1} =f(z_{j+1}) - \tilde{f}_{j}(z_{j+1}).$ 
\end{prop}
\begin{proof}
    From the optimality condition, \cref{eq:subproblem} can be written as  
\begin{align*}
    z_{j+1} = y_k -  \tilde{g}_{j + 1}/\rho, \; \tilde{g}_{j + 1}  \in \partial \tilde{f}_{j} (z_{j+1}).
\end{align*}
The fact that $\tilde{g}_{j + 1} \in \partial \tilde{f}_{j} (z_{j+1})$ implies that for all $y$, we have the following:
\begin{align*}
    f(y)  & \geq \tilde{f}_{j}(z_{j+1}) +  \innerproduct{\tilde{g}_{j + 1}}{ y - z_{j+1} }  \\
    & = f(z_{j+1}) + \innerproduct{\tilde{g}_{j + 1}}{ y - z_{j+1} } - (f(z_{j+1})  - \tilde{f}_{j}(z_{j+1})).
\end{align*}
This is the same as saying that $\tilde{g}_{j + 1} \in \partial_{\tilde{\epsilon}_{j+1}} f(z_{j+1})$ with $\tilde{\epsilon}_{j+1} = f(z_{j+1})  - \tilde{f}_{j}(z_{j+1}) \geq 0$.
\end{proof}
We next show that the error $\tilde{\epsilon}_{j+1}$ can be upper bounded by the decrease in function value whenever a descent step happens, i.e., the descent criterion \eqref{eq:testing-bundle} holds. The proof is provided in \cref{subsec:proof-inexact-ppm}.

\begin{prop} 
    \label{prop:inexact-ppm-interp}
    Let $f : \RR^n \to \RR$ be convex. The iterate $x_{k + 1} =$~\ProDes($y_k, \beta, \rho)$ with $\rho > 0$ and $\beta \in (0,1)$  satisfies 
    \begin{equation} \label{eq:inexact-iterate}
         x_{k+1} = y_k -g_{k + 1}/\rho, \; g_{k + 1} \in \partial_{\epsilon_{k+1}} f(x_{k+1}), \; \epsilon_{k + 1} 
        \leq \frac{1 - \beta}{\beta}\left(f(y_k) - f(x_{k + 1})\right).
    \end{equation}
\end{prop}
Note that we replaced $z_{j + 1}$ by $x_{k + 1}$ in \cref{prop:inexact-ppm-interp} since $z_{j + 1}$ satisfies the descent criterion \eqref{eq:testing-bundle} by assumption. \Cref{prop:inexact-ppm-interp} bounds the inexactness of the subgradient. It is not immediate, however, that the inexactness in \cref{prop:inexact-ppm-interp} satisfies the acceleration condition \cref{eq:A-HPE-condition}. Fortunately, under the smoothness assumption and with a correctly chosen $\beta$, we can guarantee the condition \cref{eq:A-HPE-condition}.

The key insight is to use smoothness to relate the function difference $f(y_k) - f(x_{k + 1})$ to the inexact subgradient interpretation in \cref{prop:inexact-subgrad}. Then, notice that we can choose $\beta$ so that the $\frac{1 - \beta}{\beta}$ term in \Cref{eq:inexact-iterate} contributes a damping factor to the bound. This damping mechanism enables the enlarged step size choice of \cref{alg:A-PBM-main} (i.e., we allow $\rho < M$) when compared to Nesterov's AGD that always requires a small step size $1/M$. We formalize this result in the next lemma. The proof is postponed to \cref{subsec:prop:smooth-epsilon-bound}. 

\begin{lem}
    \label{lemma:smooth-epsilon-bound}
    Let $f : \RR^n \to \RR$ be $M$-smooth and convex, and choose $\rho \geq \frac{M}{c}$, $\beta \in \big[ \frac{c + 2\sqrt{c} + 2}{c + 2\sqrt{c} + 3},1 \big)$ for $c > 0$ arbitrary. The iterate $x_{k + 1} =$~\ProDes($y_k, \beta, \rho)$ satisfies
    \begin{equation*}
       x_{k+1} = y_k - {g}_{k + 1}/\rho, \; {g}_{k + 1} \in \partial_{\epsilon_{k+1}} f(x_{k+1}), \; 2\epsilon_{k + 1}\leq \rho \|x_{k + 1} - y_k\|^2.
    \end{equation*}
\end{lem}

Combining \Cref{lemma:smooth-epsilon-bound} with \Cref{thm:monteiro-conv-main} confirms that the iterate $\{x_k\}$ in \cref{alg:A-PBM-main} satisfies 
$$ f(x_k) - f^\star \leq \frac{2 \rho \cdot \Dist(x_0, S)^2}{k^2}, \; \forall k\geq 1.$$ Finally, \cref{lemma:descent-step} ensures that each call of~\ProDes($y_k,\beta,\rho$) terminates in at most  $\frac{16(M+\rho)^3}{(1-\beta)^2 \rho^3}$ iterations. Combining these two results completes the proof of \cref{thm:A-PBM}.

\vspace{-3mm}
\section{Numerical experiments}
\label{sec:Numerical}

In this section, we perform two numerical experiments to test the numerical performance of our proposed \Cref{alg:A-PBM-main}. We use the essential model \cref{eq:finite-memory-models} to construct the under-estimator in the oracle \ProDes($y_k,\beta,\rho$) in \cref{alg:Proxi-descent-subproblem}. In this case, the subproblem in \cref{eq:PBM-trial-point} admits an analytical solution; see e.g., \citet[Claim 1]{diaz2023optimal}, or \citet[Appendix B.6]{liao2025proximal}. 

Our first experiment is to verify the $\bigO(1/k^2)$ convergence guaranteed by \cref{thm:A-PBM}. To demonstrate the worst convergence rate, we consider the function $f(x) = \frac{1}{8}x^\tr Lx - \frac{1}{4}\langle x, e_1\rangle,$ where $L \in \mathbb{R}^{200 \times 200}$ is the matrix which is $2$ on the diagonal and $-1$ on the off-diagonals, and $e_1$ is the standard basis vector of the first coordinate. This function $f$ is $1$-smooth and convex but not strongly convex. It is also used in \citet[Section 2.1.7]{nesterov2018lectures} to show the $\Omega(1/k^2)$ complexity bounds for smooth and convex functions under a first-order oracle. We run both the classical PBM and accelerated PBM for $1000$ iterations. The numerical result is presented in \cref{fig:worst}-(a). In \cref{fig:worst}-(a), we see that the classical PBM only has the usual $\bigO(1/k)$ convergence behavior. In contrast, the accelerated PBM enjoys a much faster $\bigO(1/k^2)$ convergence rate, validating our theoretical findings in \cref{thm:A-PBM}.

Our second experiment considers the logistic regression objective $f(w) = \frac{1}{m}\sum_{i = 1}^m \log(1 + e^{-y_i\langle x_i, w\rangle})$ with synthetic feature-label pairs $\{(x_i, y_i)\}_{i = 1}^m $ such that  $ x_i \in \RR^{200}$, $ y_i = \pm 1$, and $m = 200$. We generate the problem data such that the objective has a smoothness constant bounded by $1000$ and a minimizer $w^\star = 0$. We compare Nesterov’s AGD (\cref{alg:NGD}) with our proposed accelerated PBM (\Cref{alg:A-PBM-main}) with different choices of parameters. The result of the experiment is presented in \cref{fig:worst}-(b). We see that the accelerated PBM instances that use a larger step size (smaller $\rho$) greatly speed up the convergence. For example, the accelerated PBM with $\rho = 0.7$ and $\beta = 0.995$ achieves the accuracy of $10^{-14}$ in $20$ iterations, while the AGD only achieves the accuracy of $10^{-3}$ in $200$ iterations. This numerical experiment also validates our theoretical finding in \cref{thm:A-PBM} that the convergence is guaranteed with a larger step size.

\begin{figure}[h]
\centering
\setlength\textfloatsep{0pt}
\setlength{\belowcaptionskip}{0pt}
\setlength{\abovecaptionskip}{0pt}
\subfigure[Worst-case function]{\includegraphics[width=0.45\textwidth,height=0.20\textheight]{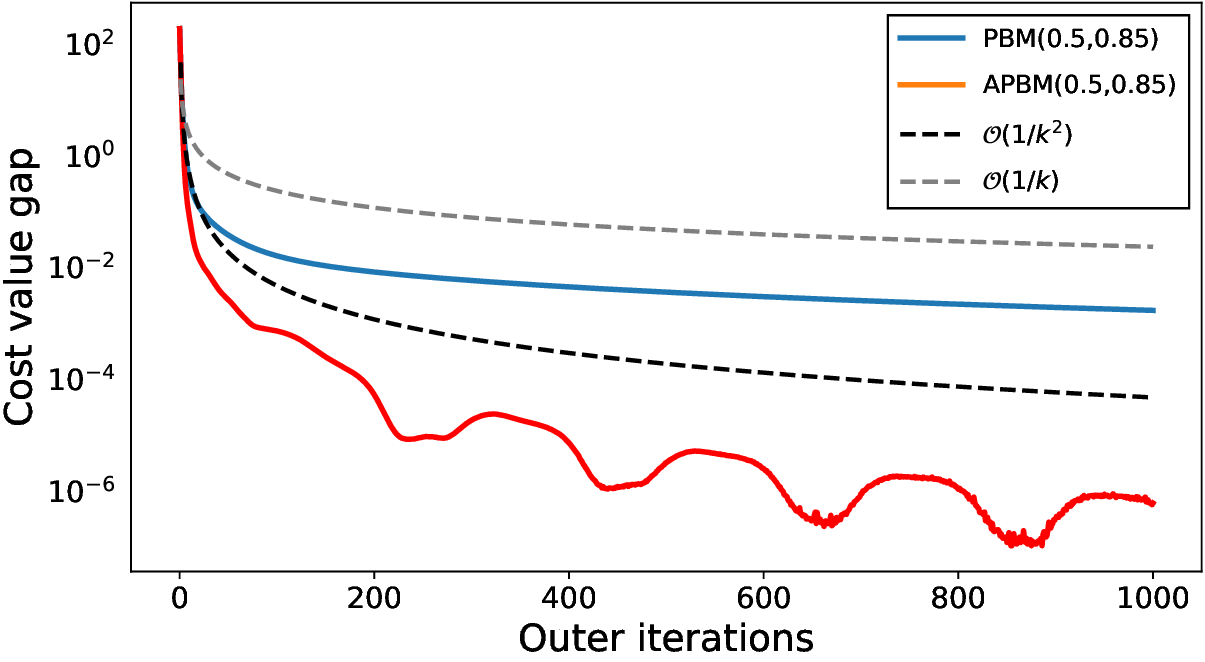}} 
\quad
\subfigure[Logistic regression]{\includegraphics[width=0.45\textwidth,height=0.20\textheight]{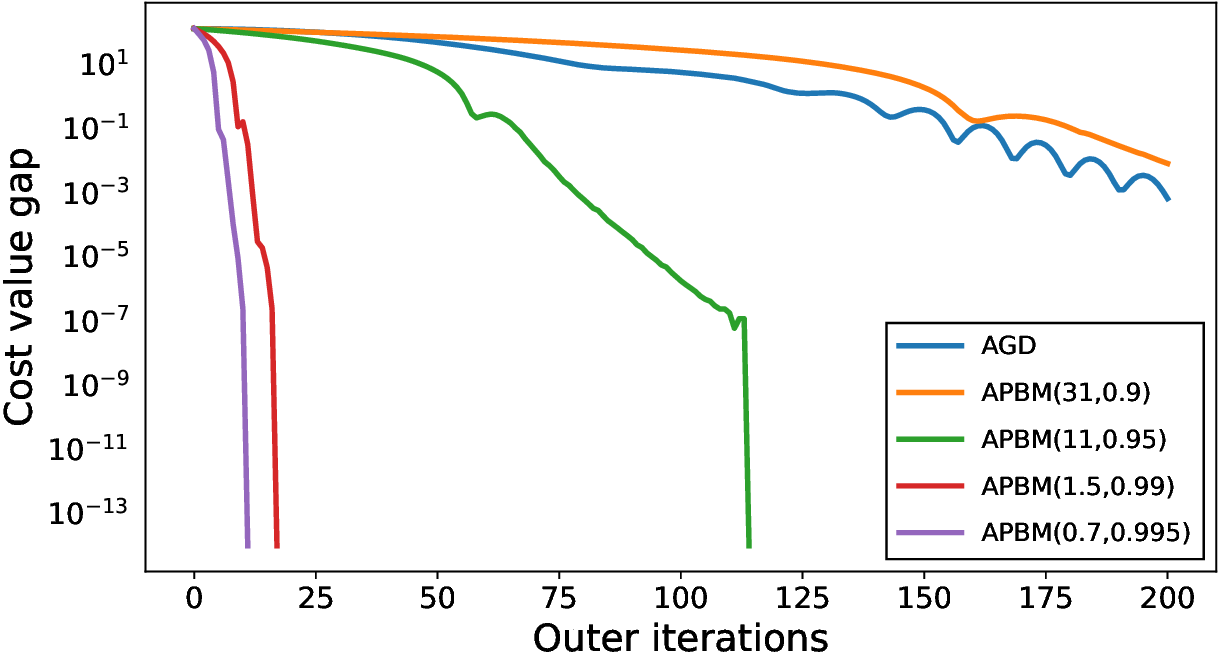}}
\caption{Numerical experiments. The worst-case function is taken from \cite{nesterov2018lectures}. The notation PBM$(x,y)$ and APBM$(x,y)$ denotes the PBM and the accelerated PBM with parameters $\rho = x$ and $\beta = y$.}
\label{fig:worst}
\vspace{-6 mm}
\end{figure}

\vspace{-1mm}
  \section{Conclusion}
\label{sec:conclusion}
\vspace{-1mm}
We have introduced a new accelerated proximal bundle method (PBM), which naturally integrates Nesterov's acceleration scheme with the classical PBM framework. The proposed method achieves the optimal $\mathcal{O}(1/\sqrt{\epsilon})$ convergence rate for smooth convex functions, and the theory is supported by numerical results. 
Since PBM is inherently designed for nonsmooth optimization, an interesting future direction is to extend these convergence guarantees beyond the smooth~setting.

\newpage
\acks{This work is supported by NSF ECCS-2154650, NSF CMMI 2320697, and NSF CAREER 2340713.}

\appendix

\bibliography{reference}

\newpage 

\section{Additional properties of the proximal bundle method}
\subsection{Proof of \cref{prop:connection-NGD}}
\label{subsec:connection-NGD-pf}
To prove \cref{prop:connection-NGD}, we first recall a result from \citep{liao2025bundle}.
\begin{theorem}[{\citet[Theorem 8]{liao2025bundle}}]
\label{thm:thm-8-lag}
    Let $\alpha > 0$, $y_k \in \mathbb{R}^n$, and $f : \mathbb{R}^n \to \mathbb{R}$. Suppose that $\tilde{f}:\mathbb{R}^n \to \mathbb{R}$ is a convex function that satisfies 
    \begin{equation}
        \tilde{f}(x) \leq f(x) \leq \tilde{f}(x) + \frac{\alpha}{2}\|x - y_k\|^2, \quad \forall \ x \in \mathbb{R}^n. \label{eq:model-quad-close}
    \end{equation}
    If $\theta \geq \alpha$ and $z = \argmin_x\left(\tilde{f}(x) + \frac{\theta}{2}\|x - y_k\|^2 \right)$, then the following holds:
    \begin{equation}
        0 \leq f(z) - \tilde{f}(z) \leq f(y_k) - f(z).
    \end{equation}
\end{theorem}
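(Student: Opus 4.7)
The plan is to show that the first trial point $z_2$ produced by \Cref{alg:Proxi-descent-subproblem} already satisfies the descent test \cref{eq:testing-bundle}, so that the loop exits after a single inner iteration. The key leverage is \Cref{thm:thm-8-lag} from the appendix, which converts a model-gap bound of the form $f(x)\le\tilde{f}(x)+\frac{\alpha}{2}\|x-y_k\|^2$ into the sandwich $0\le f(z)-\tilde{f}(z)\le f(y_k)-f(z)$ at the proximal minimizer $z$. Once this sandwich is available for $\tilde{f}=\tilde{f}_1$ and $z=z_2$, the descent test reduces to a one-line inequality on $\beta$.

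First, I would verify that the initial under-estimator $\tilde{f}_1$ fits the hypotheses of \Cref{thm:thm-8-lag} with $\alpha=M$. The lower bound $\tilde{f}_1(y)\le f(y)$ is exactly the first item of \cref{assump:bm}. For the upper bound, the subgradient item of \cref{assump:bm} at $z_1=y_k$ (using $v_1=\nabla f(y_k)$, which is valid by smoothness) gives
\[
\tilde{f}_1(y)\ge f(y_k)+\langle \nabla f(y_k),\,y-y_k\rangle,\qquad \forall y\in\RR^n,
\]
and $M$-smoothness of $f$ yields
\[
f(y)\le f(y_k)+\langle \nabla f(y_k),\,y-y_k\rangle+\tfrac{M}{2}\|y-y_k\|^2.
\]
Subtracting produces $f(y)\le \tilde{f}_1(y)+\frac{M}{2}\|y-y_k\|^2$, so the two-sided bound \cref{eq:model-quad-close} holds with $\alpha=M$.

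Second, applying \Cref{thm:thm-8-lag} with $\theta=\rho\ge M=\alpha$ to the proximal minimizer $z_2=\argmin_y\{\tilde{f}_1(y)+\frac{\rho}{2}\|y-y_k\|^2\}$ from \cref{eq:PBM-trial-point} delivers
\[
0\le f(z_2)-\tilde{f}_1(z_2)\le f(y_k)-f(z_2).
\]
In particular $f(y_k)-f(z_2)\ge 0$. Rewriting the descent criterion \cref{eq:testing-bundle} at $j=1$ as
\[
(1-\beta)\bigl(f(y_k)-f(z_2)\bigr)\ge \beta\bigl(f(z_2)-\tilde{f}_1(z_2)\bigr),
\]
and using the sandwich bound on the right-hand side, it suffices to check $(1-\beta)(f(y_k)-f(z_2))\ge \beta (f(y_k)-f(z_2))$, which holds whenever $\beta\le\frac{1}{2}$. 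Therefore the test \cref{eq:testing-bundle} is satisfied at $z_2$ and \Cref{alg:Proxi-descent-subproblem} terminates after the first inner iteration.

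I do not foresee a genuine obstacle here, since \Cref{thm:thm-8-lag} is quoted as a black box and the algebra reducing the descent test to $1-\beta\ge\beta$ is direct. The only minor care point is confirming that the quadratic-upper-bound hypothesis of \Cref{thm:thm-8-lag} holds for \emph{any} admissible $\tilde{f}_1$ (not only the linearization), which is precisely what the subgradient item of \cref{assump:bm} combined with smoothness gives.
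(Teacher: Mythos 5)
There is a genuine gap here, and it is a structural one: your proposal does not prove the statement it is supposed to prove. The statement in question is \Cref{thm:thm-8-lag} itself, i.e., the sandwich inequality $0 \le f(z)-\tilde{f}(z) \le f(y_k)-f(z)$ under the two-sided model bound \cref{eq:model-quad-close} and $\theta \ge \alpha$. What you wrote is instead a proof of \Cref{prop:connection-NGD} (one-step termination of \Cref{alg:Proxi-descent-subproblem} when $\beta \le 1/2$ and $\rho \ge M$), in which you explicitly invoke \Cref{thm:thm-8-lag} ``as a black box.'' Relative to the target statement this is circular: no step of your argument derives the upper bound $f(z)-\tilde{f}(z) \le f(y_k)-f(z)$ from the hypotheses; you only use that bound. (Incidentally, the argument you do give coincides with the paper's proof of \Cref{prop:connection-NGD} in the appendix, but that is a different result, which the paper proves by citing \Cref{thm:thm-8-lag} from \citet{liao2025bundle}.)

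For completeness, the missing proof is short. The lower bound is immediate from $\tilde{f} \le f$. For the upper bound, the objective $x \mapsto \tilde{f}(x)+\frac{\theta}{2}\|x-y_k\|^2$ is $\theta$-strongly convex and minimized at $z$, so comparing its values at $y_k$ and $z$ gives
\begin{equation*}
\tilde{f}(z)+\frac{\theta}{2}\|z-y_k\|^2+\frac{\theta}{2}\|y_k-z\|^2 \;\le\; \tilde{f}(y_k) \;\le\; f(y_k),
\end{equation*}
hence $f(y_k)-f(z) \ge \theta\|z-y_k\|^2-\bigl(f(z)-\tilde{f}(z)\bigr) \ge \theta\|z-y_k\|^2-\frac{\alpha}{2}\|z-y_k\|^2 \ge \frac{\theta}{2}\|z-y_k\|^2$, using \cref{eq:model-quad-close} and $\theta \ge \alpha$. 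Combining this with $f(z)-\tilde{f}(z) \le \frac{\alpha}{2}\|z-y_k\|^2 \le \frac{\theta}{2}\|z-y_k\|^2$ yields $f(z)-\tilde{f}(z) \le f(y_k)-f(z)$, which is the claimed inequality. If your intent was to prove \Cref{prop:connection-NGD}, your reduction of the descent test \cref{eq:testing-bundle} to $1-\beta \ge \beta$ is fine and matches the paper; but as a proof of \Cref{thm:thm-8-lag} the proposal is empty.
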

As \cref{prop:connection-NGD} considers a convex and $M$-smooth function $f$, it is clear that for any $y_k \in \RR^n$, the function $\tilde{f}(\cdot) = f(y_k) + \langle \nabla f(y_k), \cdot - y_k \rangle$ and the constant $\alpha = M$ satisfy \cref{eq:model-quad-close}. With this theorem in hand, we are ready to prove \cref{prop:connection-NGD}.\\

\noindent \textbf{Proof of \cref{prop:connection-NGD}:} 
Given a center point $y_k$, by \cref{assump:bm}, we know that $\tilde{f}_1(y) \leq f(y)$ for all $y$, and 
\begin{align*}
    &f(y_k) + \innerproduct{\nabla f(y_k )}{y-y_k} \leq \tilde{f}_{1}(y),\; \forall y \in \RR^n \\
    \Longrightarrow \; & f(y) \leq  f(y_k) + \innerproduct{\nabla f(y_k )}{y-y_k} + \frac{\rho}{2}\|y- y_k\|^2 \leq \tilde{f}_{1}(y) + \frac{\rho}{2}\|y- y_k\|^2 ,\; \forall y \in \RR^n, \rho \geq M,
\end{align*}
where the implication is due to the $M$-smoothness. Thus, we see that $\tilde{f}_1 $ satisfies \cref{eq:model-quad-close}. Invoking \cref{thm:thm-8-lag} on the update $z_2 = \argmin_{y} \{ \tilde{f}_1(y) + \frac{\rho}{2}\|y - y_k\|^2 \} $, we obtain
\begin{equation}
    \label{eq:conseq}
    0 \leq f(z_2) - \tilde{f}(z_2) \leq f(y_k) - f(z_2).
\end{equation}
Finally, we calculate that
\begin{align}
        \beta(f(y_k) - f_k(z_{2})) & = \beta(f(y_k) - f(z_{2}) + f(z_{2}) - f_k(z_{2})) \label{eq:add-sub}\\
        & \leq \beta(f(y_k) - f(z_{2}) + f(y_k) - f(z_{2})) \label{eq:thm-8-bound}\\
        & = 2\beta(f(y_k) - f(z_{2}) \nonumber \\
        & \leq f(y_k) - f(z_{2}) \label{eq:beta-1/2}
    \end{align}
    where we added and subtracted $f(z_{2})$ within the parentheses in \Cref{eq:add-sub}, applied \Cref{eq:conseq} in \Cref{eq:thm-8-bound}, and used that $\beta \leq \frac{1}{2}$ in \Cref{eq:beta-1/2}. Thus, the acceptance criterion \Cref{eq:testing-bundle} is satisfied for $z_{2}$, so \cref{alg:Proxi-descent-subproblem} terminates in one iteration. \hfill $\square$

\subsection{Proof of \cref{prop:inexact-ppm-interp}}
\label{subsec:proof-inexact-ppm}

For completeness, we prove \cref{prop:inexact-ppm-interp} now, a result which allows us to bound differences in function values as opposed to bounding the model gap.\\

\noindent \textbf{Proof of \cref{prop:inexact-ppm-interp}:} 
From \cref{prop:inexact-subgrad}, we can write $x_{k+1} =$~\ProDes($y_k, \beta, \rho$) as $x_{k+1} = y_k - \frac{1}{\rho} \tilde{g}_J$ where $J$ is the last iteration index in \cref{alg:Proxi-descent-subproblem} and 
\begin{align*}
   \tilde{g}_J \in \partial f_{\tilde{\epsilon}_{J+1}}(x_{k+1}), \; \tilde{\epsilon}_{J+1} = f(x_{k + 1}) - \tilde{f}_J(x_{k + 1}).
\end{align*}
Moreover, the error $ f(x_{k + 1}) - \tilde{f}_J(x_{k + 1})$ can be further bounded as
\begin{align}
    f(x_{k + 1}) - \tilde{f}_J(x_{k + 1}) & \leq f(y_k) - \beta\big(f(y_k) - \tilde{f}_J(x_{k + 1})\big) - \tilde{f}_J(x_{k + 1}) \label{eq:bundle-cons-1}\\
    & = (1 - \beta)\big(f(y_k) - \tilde{f}_J(x_{k + 1})\big) \label{eq:bundle-combine}\\
    & \leq \frac{1 - \beta}{\beta}\big(f(y_k) - f(x_{k + 1})\big), \label{eq:bundle-cons-2}
\end{align}
where \eqref{eq:bundle-cons-1} and \eqref{eq:bundle-cons-2} apply the definition of the test in \eqref{eq:testing-bundle}, and \eqref{eq:bundle-combine} combines like terms. Finally, letting $g_{k + 1} = \tilde{g}_J$ and $ \epsilon_{k+1} =  \tilde{\epsilon}_{J+1}$ completest the proof.

\hfill $\square$ 

\subsection{Proof of \cref{lemma:smooth-epsilon-bound}}
\label{subsec:prop:smooth-epsilon-bound}
An important ingredient in the proof of \cref{lemma:smooth-epsilon-bound} is the fact that, given a smooth function, the distance between a true gradient and an inexact subgradient can be controlled by the inexactness. This is formalized in the following lemma.
\begin{lem}
    \label{lem:inexact-subgrad-closeness}
    Let $f : \RR^n \to \RR$ be $M$-smooth and convex. Suppose that $g \in \partial_\epsilon f(x)$ with an arbitrary $\epsilon > 0$. Then, we have
    \begin{equation*}
        \|\nabla f(x) - g\| \leq \sqrt{2M \epsilon}.
    \end{equation*}
\end{lem}
\begin{proof}
    By definition,  $g \in \partial_\epsilon f (x)$ ensures that the following lower bound holds for all $y \in \RR^n$:
    \begin{equation}
        f(y) \geq f(x) + \langle g, x - y\rangle - \epsilon. \label{eq:inexact-subgrad-def}
    \end{equation}
    Specifying \eqref{eq:inexact-subgrad-def} to the choice $y = x - \frac{1}{M}(\nabla f(x) - g)$ gives the lower bound
    \begin{equation}
        f(x - \frac{1}{M}(\nabla f(x) - g)) \geq f(x) - \frac{1}{M} \langle g, \nabla f(x) - g\rangle - \epsilon. \label{eq:inexact-subgrad-lb}
    \end{equation}
    Meanwhile, smoothness gives an upper bound on $f(x - \frac{1}{M}(\nabla f(x) - g))$ via
    \begin{equation}
        f(x - \frac{1}{M}(\nabla f(x) - g)) \leq f(x) - \frac{1}{M}\langle \nabla f(x), \nabla f(x) - g\rangle + \frac{M}{2}\|-\frac{1}{M}(\nabla f(x) - g)\|^2. \label{eq:inexact-subgrad-ub}
    \end{equation}
    Therefore, combining the bounds in \eqref{eq:inexact-subgrad-lb} and \eqref{eq:inexact-subgrad-ub} gives
    \begin{align}
        -\frac{1}{M}\langle g, \nabla f(x) - g\rangle - \epsilon & \leq - \frac{1}{M}\langle \nabla f(x), \nabla f(x) - g\rangle + \frac{1}{2M}\|\nabla f(x) - g\|^2 \notag\\
        \Rightarrow \quad \frac{1}{M}\langle \nabla f(x) - g, \nabla f(x) - g\rangle - \epsilon & \leq \frac{1}{2M}\|\nabla f(x) -g\|^2 \label{eq:inexact-subgrad-rearr1}\\
        \Rightarrow \quad \|\nabla f(x) - g\|^2 \leq 2M\epsilon \label{eq:inexact-subgrad-rearr2}
    \end{align}
    where \eqref{eq:inexact-subgrad-rearr1} and \eqref{eq:inexact-subgrad-rearr2} rearrange terms. Taking square roots on both sides of \eqref{eq:inexact-subgrad-rearr2} gives the claim.
\end{proof}
With \cref{lem:inexact-subgrad-closeness}, we now prove \cref{lemma:smooth-epsilon-bound}. The idea is to bound the backwards difference $f(y_k) -f(x_{k + 1})$ using smoothness, which is possible due to the inexact PPM interpretation.\\

\noindent \textbf{Proof of \cref{lemma:smooth-epsilon-bound}:} 
By \cref{prop:inexact-ppm-interp}, it suffices to bound the quantity
\begin{equation*}
    \hat{\epsilon} := \frac{1 - \beta}{\beta}\big(f(y_k) - f(x_{k + 1})\big)
\end{equation*} 
 by $\frac{\rho}{2}\|x_{k + 1} - y_k\|^2$. We begin to bound $\hat{\epsilon}$ as follows:
 \begin{subequations}
\begin{align}
    \hat{\epsilon} & \leq \frac{1 - \beta}{\beta}\left(\langle \nabla f(x_{k + 1}), y_k - x_{k + 1}\rangle + \frac{M}{2}\|x_{k + 1} - y_k\|^2 \right) \label{eq:prop4-smoothness}\\
    & \leq \frac{1 - \beta}{\beta}\left(\langle \nabla f(x_{k + 1}) - g_{k + 1}, y_k - x_{k + 1}\rangle + \langle g_{k + 1}, y_k - x_{k + 1} \rangle + \frac{M}{2}\|x_{k + 1} - y_k\|^2\right) \label{eq:prop4-add-subtract}\\
    & = \frac{1 - \beta}{\beta}\left(\langle \nabla f(x_{k + 1}) - g_{k + 1}, y_k - x_{k + 1}\rangle + \rho\|x_{k + 1} - y_k\|^2 + \frac{M}{2}\|x_{k + 1} - y_k\|^2 \right) \label{eq:prop4-v-def},
\end{align}
 \end{subequations}
where \Cref{eq:prop4-smoothness} applies $M$-smoothness of the objective function, \cref{eq:prop4-add-subtract} adds and subtracts $g_{k + 1}$ within the inner product, and \cref{eq:prop4-v-def} uses that $g_{k + 1} = \rho(y_k - x_{k + 1})$ as was shown in \cref{prop:inexact-ppm-interp}. Bounding the inner product above and applying \cref{lem:inexact-subgrad-closeness} with $x = x_{k + 1}$ yields the bound
\begin{align}
    \hat{\epsilon} & \leq \frac{1 - \beta}{\beta}\left(\sqrt{2M\hat{\epsilon}}\|x_{k + 1} - y_k\| + \big(\rho + \frac{M}{2}\big)\|x_{k + 1} - y_k\|^2 \right) \label{eq:prop4-lemma-app}.
\end{align}
We see that \eqref{eq:prop4-lemma-app} is a quadratic inequality in $\sqrt{\hat{\epsilon}}$. As the coefficient of the quadratic term is positive, an upper bound on $\hat{\epsilon}$ is given by the square of the largest solution to the quadratic. We deduce
\begin{align}
    \hat{\epsilon} \leq \left(\frac{1 - \beta}{\beta}\right)^2\left(M + \frac{\beta}{1 - \beta}\big(\rho + \frac{M}{2}\big) + \sqrt{M^2 + \frac{2M\beta}{1 - \beta}\big(\rho + \frac{M}{2}\big)}\right)\|x_{k + 1} - y_k\|^2. \label{eq:prop4-final-bound}
\end{align}
We now finish the proof of \cref{lemma:smooth-epsilon-bound} by establishing the fact that whenever 

\begin{subequations} \label{eq:smoothness-parameter-bound}
\begin{equation} \label{eq:smoothness-parameter-bound-a}
    \rho \geq \frac{M}{c}, \quad \beta \in \left[\frac{c + 2\sqrt{c} + 2}{c + 2\sqrt{c} + 3}, 1\right)
\end{equation}
we have 
\begin{equation} \label{eq:smoothness-parameter-bound-b}
    \left(\frac{1 - \beta}{\beta}\right)^2\left(M + \frac{\beta}{1 - \beta}\big(\rho + \frac{M}{2}\big) + \sqrt{M^2 + \frac{2M\beta}{1 - \beta}\big(\rho + \frac{M}{2}\big)}\right) \leq \frac{\rho}{2}. 
\end{equation}
\end{subequations}
This inequality \Cref{eq:smoothness-parameter-bound} is not difficult to show, only requiring single variable calculus. For completeness, we provide the details in \cref{appendix:smoothness-bound}.

\subsection{The inequality in \Cref{eq:smoothness-parameter-bound}} \label{appendix:smoothness-bound}
Here, we show that the parameters in \cref{eq:smoothness-parameter-bound-a} always guarantee the bound in \cref{eq:smoothness-parameter-bound-b}.  To simplify notation, we write
\begin{equation}
    b := \frac{1 - \beta}{\beta}, \quad h_b(\rho) := b^2\left(M + \frac{1}{b}\big(\rho + \frac{M}{2}\big) + \sqrt{M^2 + \frac{2M}{b}\big(\rho + \frac{M}{2}\big)}\right)  \label{eq:param-h-def}
\end{equation}
and our goal becomes showing that $h_b(\rho) \leq \frac{\rho}{2}$ when \cref{eq:smoothness-parameter-bound-a} holds. To do this, we will first show that our restriction on $\beta \in [\frac{c + 2\sqrt{c} + 2}{c + 2\sqrt{c} + 3}, 1)$ (corresponding to $b \in (0,\frac{1}{c + 2\sqrt{c} + 2}]$) ensures that, keeping $\beta$ fixed, $h_b'(\rho) \leq \frac{1}{2}$ for $\rho \geq \frac{M}{c}$. Then, we will show that we have $h_b(\rho) = \frac{\rho}{2}$ for the smallest choices of $\rho$ and the largest choice of $b$ (corresponding to the smallest choice of $\beta$ since $b = \frac{1-\beta}{\beta}$ increases as $\beta$ decreases). Finally, since $\left(\frac{\rho}{2}\right)' = \frac{1}{2}$, and $h_b(\rho)$ is simultaneously increasing in $b$ and $\rho$, we conclude that $h_b(\rho) \leq \frac{\rho}{2}$ for all  $\rho \geq \frac{M}{c}, 
b \in (0,\frac{1}{c + 2\sqrt{c} + 2}]$.\\

\noindent \textbf{Step 1.} Show $h_b'(\rho) \leq \frac{1}{2}$ for all $\rho \geq \frac{M}{c}$ and $b \in (0,\frac{1}{c + 2\sqrt{c} + 2}]$:
Taking derivatives of $h_b(\rho)$ with respective to $\rho$ gives that 
\begin{equation}
    h_b'(\rho) = b + b^2\frac{1}{2\sqrt{M^2 + \frac{2M}{b}(\rho + \frac{M}{2}\big)}}\frac{2M}{b} = b + \frac{Mb}{\sqrt{M^2 + \frac{2M}{b}(\rho + \frac{M}{2})}} \label{eq:param-h'-initial}.
\end{equation}
Since $\rho + \frac{M}{2} \geq \rho$ and $\rho \geq \frac{M}{c}$, we can write
\begin{equation*}
    M^2 + \frac{2M}{b}\big(\rho + \frac{M}{2}\big) \geq M^2 + \frac{2M}{b}\rho \geq M^2 + \frac{2M^2}{bc}
\end{equation*}
so that a specialized upper bound on $h_b'(\rho)$ is given by
\begin{equation}
   \tilde{h}(b) :=  b + \frac{Mb}{\sqrt{M^2 + \frac{2M^2}{bc}}} = b + \frac{b}{\sqrt{1 + \frac{2}{bc}}}.
\end{equation}
We shift our attention to obtaining $\tilde{h}(b) \leq \frac{1}{2}$ from which the claim follows. Observe that $\tilde{h}(b)$ is increasing in $b$. The range of $\beta \in [\frac{c + 2\sqrt{c} + 2}{c + 2\sqrt{c} + 3},1)$ corresponds to the range $b \in (0,\frac{1}{c + 2\sqrt{c} + 2}]$.  We aim to show the claim at the extreme choice $b_{\text{max}} :=  \frac{1}{c + 2\sqrt{c} + 2}$.
Direct computation gives 
\begin{subequations}
\begin{align}
    \tilde{h}(b_{\text{max}}) & = \frac{1}{c + 2\sqrt{c} + 2}\left(1 + \frac{\sqrt{c}}{\sqrt{3c + 4\sqrt{c} + 4}}\right) \notag\\
    & \leq \frac{1}{c + 2\sqrt{c} + 2}\left(1 + \frac{\sqrt{c}}{2 + \sqrt{c}}\right) \label{eq:param-h-tilde-bound-1}\\
    & = \frac{2(\sqrt{c} + 1)}{(c + 2\sqrt{c} + 2)(\sqrt{c} + 2)} \notag\\
    & \leq \frac{2(\sqrt{c} + 1)}{4(\sqrt{c} + 1)} = \frac{1}{2}, \label{eq:param-h-tilde-bound-2}
\end{align}
\end{subequations}
where the equations \cref{eq:param-h-tilde-bound-1} and \cref{eq:param-h-tilde-bound-2} apply the two elementary inequalities, respectively:
\begin{itemize}
    \item $(\sqrt{c} + 2)^2 = c + 4\sqrt{c} + 4 \leq 3c + 4\sqrt{c} + 4$
    \item $4(\sqrt{c} + 1) \leq c^{3/2} + 4c + 6\sqrt{c} + 4 = (\sqrt{c} + 2\sqrt{c} + 2)(\sqrt{c} + 2)$.
\end{itemize}

\noindent \textbf{Step 2.} Show that the choice $\rho = \frac{M}{c},b_{\text{max}} = \frac{1}{c + 2\sqrt{c} + 2}$ obtains equality, i.e. $h_{b_{\text{max}}}\big(\frac{M}{c}\big) = \frac{\rho}{2}$: 

Under this choice of parameters, we have
\begin{equation}
    M = c\rho, \qquad \text{so that} \qquad M + \frac{\rho}{2} = \rho\big(1 + \frac{c}{2}\big) \label{eq:param-ext-simp},
\end{equation}
and thus the expression within the radical of $h_{b_{\text{max}}}\big(\frac{M}{c}\big)$ becomes
\begin{subequations}
\begin{align}
    M^2 + 2M(c + 2\sqrt{c} + 2)(\rho + \frac{M}{2}\big) & = c^2\rho^2 + 2c\rho^2(c + 2\sqrt{c} + 2)\big(1 + \frac{c^2}{2}\big) \label{eq:param-ext-comp}\\
    & = \rho^2\left(c^2 + 2c\big(c +  2\sqrt{c} + 2\big)\big(1 + \frac{c^2}{2}\big)\right) \label{eq:param-ext-fact-1}\\
    & = \rho^2\left(c^2 + c\big(c^2 + 2c^{3/2} + 4c + 4c^{1/2} + 4\big)\right) \label{eq:param-ext-mult}\\
    & = \rho^2c(c + c^{1/2} + 2)^2 \label{eq:param-ext-square-1}
\end{align}
\end{subequations}
where \cref{eq:param-ext-comp} applies \cref{eq:param-ext-simp} and our choice of $\beta$, \cref{eq:param-ext-fact-1} factors $\rho^2$ out of \eqref{eq:param-ext-comp}, \cref{eq:param-ext-mult} multiplies out the expressions within the parentheses, and \cref{eq:param-ext-square-1} notices that the expression in \eqref{eq:param-ext-mult} a square. We obtain
\begin{subequations}
\begin{align}
    h_{b_{\text{max}}}\big(\frac{M}{c}\big) & = b^2_{\text{max}}\left(c\rho + \frac{1}{b_{\text{max}}}\rho\big(1 + \frac{c^2}{2}\big) + \rho c^{1/2}\big(c + c^{1/2} + 2\big)\right) \label{eq:param-ext-app-rad}\\
    & = \rho b_{\text{max}}^2\left(c + \big(c + 2\sqrt{c} + 2\big)\big(1 + \frac{c^2}{2}\big) + c^{1/2}\big(c + c^{1/2} + 2\big)\right) \label{eq:param-ext-rho-fact-2}\\
    & = \rho b_{\text{max}}^2\frac{(c + 2\sqrt{c} + 2)^2}{2} \label{eq:param-ext-square-2}\\
    & = \frac{\rho}{2} \notag,
\end{align}
\end{subequations}
where \cref{eq:param-ext-app-rad} substitutes \cref{eq:param-ext-simp} and \cref{eq:param-ext-square-1} into the definition of $h_{b_{\text{max}}}\big(\frac{M}{c}\big)$, \cref{eq:param-ext-rho-fact-2} factors $\rho$ out, and \cref{eq:param-ext-square-2} recognizes a square once more.

\vspace{1mm}
\noindent \textbf{Step 3.} Finally, we note that $h_{b}(\rho)$ increases when $b$ increases or $\rho$ increases. From step 2, we know that the largest choice of $b = b_{\text{max}}$ and the smallest choice of $\rho = \frac{M}{c}$ gives $h_{b_{\text{max}}}\big(\rho \big) = \frac{\rho}{2}$. To conclude, we only need to argue that for all $\rho \geq \frac{M}{c}$, we have $h_{b_{\text{max}}}\big(\rho \big) \leq  \frac{\rho}{2}$. This is indeed the case due to step 1. In other words, step 1 tells us $h_b'(\rho) \leq \frac{1}{2}$ for all $\rho \geq \frac{M}{c}$ and $b \in (0,b_{\text{max}}]$, and we know $(\frac{\rho}{2})' = \frac{1}{2}$. Since  $h_{b_{\text{max}}}\big(\rho \big) = \frac{\rho}{2}$ when $\rho = \frac{M}{c}$, we conclude that $h_{b_{\text{max}}}(\rho)$ remains bounded by $\frac{\rho}{2}$ as $\rho$ increases from $\frac{M}{c}$.
\hfill $\square$

\subsection{Equivalence between the double-loop \cref{alg:bundle} and classical single-loop PBM}
\label{subsec:reinterpretation}
This subsection discusses the equivalence between \cref{alg:bundle} and the classical single-loop PBM. In other words, we show that \cref{alg:bundle} is a reformulation of the classical PBM. Similar discussion can also be found in \citet[Section 4.3]{liao2025bundle}. Let us first review the classical single-loop PBM in \cref{alg:PBM-classical}.
At every iteration $k$, it solves the following subproblem
\begin{align*}
    z_{k+1}= \argmin_{y}  \left\{f_k(y) + \frac{\rho}{2}\|y - y_k\|^2 \right\}
\end{align*}
to get candidate solution $z_{k+1}$. To decide if $z_{k+1}$ makes sufficient descent, we adapt the test
\begin{align}
    \label{eq:test}
    \beta (f(y_k) - f_k(z_{k+1})) \leq f(y_k) -  f(z_{k+1}), 
\end{align}
where $\beta \in (0,1)$ is a pre-defined constant. If \cref{eq:test} holds, then we set $y_{k+1} = z_{k+1}$ (known as a descent step). Otherwise, we set $y_{k+1} = y_k$ (this is called a null step). Afterwards, regardless of a descent step or a null step, the algorithm updates the model $f_{k+1}$ following \cref{assumption:approximation-conditions}.

Below, we explain the equivalence between \cref{alg:PBM-classical,alg:bundle}. First, the test \cref{eq:test} is the same as the test \cref{eq:testing-bundle}. Second, the under-estimator $\tilde{f}_j$ in \cref{alg:Proxi-descent-subproblem} of \cref{alg:bundle} is equivalent to the $f_k$ in \cref{alg:PBM-classical}. The $\tilde{f}_j$ in \cref{alg:PBM-classical} resets the index $j$ whenever a new center point is acquired, whereas \cref{alg:PBM-classical} keeps the iteration count $k$ throughout the whole process. Therefore, the only difference is the notation and indices. One can view the oracle \ProDes($y_k,\beta,\rho$) in \cref{alg:bundle} as a cycle of null steps in \cref{alg:PBM-classical} and the satisfaction of the stopping criterion in \ProDes($y_k,\beta,\rho$) corresponds to the fulfillment of
the test \cref{eq:test}. Third, the assumptions for constructing the under-estimators in both \cref{alg:PBM-classical,alg:bundle} are the same, i.e., \cref{assump:bm,assumption:approximation-conditions} are the same.  

In summary, \cref{alg:bundle} is a re-interpretation of the classical single-loop PBM \cref{alg:PBM-classical}. \cref{alg:bundle}, however, provides a cleaner distinction between null and descent steps.

\begin{algorithm}[t]
\caption{Classical proximal bundle method}
\label{alg:PBM-classical}
\begin{algorithmic}
\REQUIRE $y_1 \in \RR^n, T > 0, \rho > 0,$ $\beta \in (0,1)$
\FOR{$k=1,2, \ldots, T$}
    \STATE Compute 
    $
         z_{k+1}= \argmin_{y} \left\{ f_k(y) + \frac{\rho}{2}\|y - y_k\|^2 \right\};
    $
    \IF {$\beta (f(y_k) - f_k(z_{k+1})) \leq f(y_k) -  f(z_{k+1})  $ }
        \STATE Set $y_{k+1} = z_{k+1}$; \hfill \textit{Descent step}
    \ELSE
        \STATE Set $y_{k+1} = y_k$; \hfill \textit{Null step}
    \ENDIF
    \STATE Construct $f_{k+1}$ that approximates $f(\cdot)$ satisfying \cref{assumption:approximation-conditions}
\ENDFOR
\end{algorithmic}
\end{algorithm}

\begin{assumption} \label{assumption:approximation-conditions}
The convex function $f_{k+1}$ satisfies three conditions:
\begin{enumerate}
\setlength{\itemsep}{0pt}
    \item \textbf{Lower approximation:} Global convex lower approximation, 
    $
        f_{k+1}(y) \leq f(y), \forall y \in \mathbb{R}^n.
    $
    \item  \textbf{Subgradient lower bound:} We have  
    $
         f_{k+1}(y) \geq  f(z_{k+1})  + \innerproduct{g_{k+1}}{y-z_{k+1}},\forall y \in \mathbb{R}^n,
    $
    where $g_{k+1}$ satisfies $g_{k+1}  \in  \partial f(z_{k+1})$.  
        \item \textbf{Aggregation from the past approximation:} If  fails, then we require
        $
            f_{k+1}(y) \geq f_k(z_{k+1}) + \innerproduct{s_{k+1}}{y-z_{k+1}},\forall y \in \mathbb{R}^n,
        $
        where $s_{k+1} = \rho(y_k - z_{k+1}) \in \partial f_{k}(z_{k+1})$.
\end{enumerate} 
\end{assumption}

\section{Convergence of \cref{alg:A-HPE-simplified}}
\label{sec:A-HPE-simplified-conv}
Here, we prove the convergence of \cref{alg:A-HPE-simplified}, i.e., \cref{thm:monteiro-conv-main}. Our proof in this section largely follows \citet[Section 3]{monteiro2013accelerated}. We provide the proof details for self-containment. For convenience, we restate \cref{thm:monteiro-conv-main} below.

\setcounter{thm}{1}

\begin{thm}{\citet[Theorem 3.8]{monteiro2013accelerated}} \label{thm:monteiro-conv-apx}
    Let $f:\RR^n \to \RR$ be a convex function and set $S =  \argmin_{x} f(x)$. Assume $S \neq \emptyset$. The sequence $\{x_k\}$ in \cref{alg:A-HPE-simplified} satisfies
    \begin{equation*}
        f(x_k) - \min_{x} f(x) \leq \frac{2 \rho \cdot \Dist(x_0, S)^2}{k^2}, \; \forall k\geq 1.
    \end{equation*}
\end{thm}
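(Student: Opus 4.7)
My plan is to establish the rate via a Lyapunov-style descent argument on the potential
$$\Phi_k := A_k\bigl(f(x_k) - f(x^\star)\bigr) + \frac{\rho}{2}\|z_k - x^\star\|^2,$$
where $x^\star$ is any fixed minimizer of $f$. Because $A_0 = 0$ and $z_0 = x_0$, we have $\Phi_0 = \frac{\rho}{2}\|x_0 - x^\star\|^2$, so if I can prove $\Phi_{k+1} \leq \Phi_k$ for every $k$, then $A_k(f(x_k) - f^\star) \leq \frac{\rho}{2}\|x_0 - x^\star\|^2$. Choosing $x^\star$ to be the projection of $x_0$ onto $S$ and using the elementary bound $A_k \geq k^2/4$ (which follows from $a_k^2 = A_{k+1}$, hence $\sqrt{A_{k+1}} - \sqrt{A_k} \geq 1/2$) yields the claimed $\mathcal{O}(1/k^2)$ rate.

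To produce the descent inequality, I would first invoke the $\epsilon$-subgradient inequality \cref{eq:inexact-subdifferential} for $v_k \in \partial_{\epsilon_{k+1}} f(x_{k+1})$ at the two test points $y = x_k$ and $y = x^\star$. Taking the affine combination with weights $A_k$ and $a_k$ and using the extrapolation identity $A_k x_k + a_k z_k = A_{k+1} y_k$, the mixed inner-product term rearranges into
$$A_{k+1}\langle v_k,\, y_k - x_{k+1}\rangle + a_k\langle v_k,\, x^\star - z_k\rangle - A_{k+1}\epsilon_{k+1}.$$
Substituting $v_k = \rho(y_k - x_{k+1})$ and $z_k - z_{k+1} = a_k(y_k - x_{k+1})$ turns the first summand into $A_{k+1}\rho\|y_k - x_{k+1}\|^2$ and the second into $\rho\langle z_k - z_{k+1}, x^\star - z_k\rangle$. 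The three-point identity then converts this into the telescoping piece $\frac{\rho}{2}(\|z_{k+1} - x^\star\|^2 - \|z_k - x^\star\|^2)$ minus $\frac{\rho}{2}\|z_{k+1} - z_k\|^2$.

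The crux, and the main obstacle, is the exact balancing between the quadratic slack and the inexactness budget. Using $\|z_{k+1} - z_k\|^2 = a_k^2\|y_k - x_{k+1}\|^2 = A_{k+1}\|y_k - x_{k+1}\|^2$, exactly half of the $A_{k+1}\rho\|y_k - x_{k+1}\|^2$ term survives; and the HPE condition $2\epsilon_{k+1} \leq \rho\|y_k - x_{k+1}\|^2$ from \cref{eq:A-HPE-condition} is precisely tight enough to absorb $A_{k+1}\epsilon_{k+1}$. This simultaneous cancellation is what makes the scheme $a_k^2 = A_{k+1}$ the right one; any looser stepsize rule or any weaker error tolerance would leave an uncontrolled residual. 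After this cancellation, rearranging gives $\Phi_{k+1} \leq \Phi_k$, and telescoping with the $A_k \geq k^2/4$ bound completes the proof.
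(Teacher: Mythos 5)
Your argument is correct and coincides, step for step, with the paper's own potential-function proof in \cref{section:potential}: you use the same Lyapunov quantity as in \cref{lemma:potential}, weight the same two $\epsilon$-subgradient inequalities by $a_k$ and $A_k$, and your inner-product rearrangement via the extrapolation identity $A_k x_k + a_k z_k = A_{k+1}y_k$ followed by the three-point expansion is exactly the content of \cref{lemma:key-identity}, with the HPE condition \cref{eq:A-HPE-condition} absorbing the inexactness term precisely as in the paper. Combined with the standard bound $A_k \geq k^2/4$, which the paper also proves in \cref{eq:key-1}, this recovers the stated rate.
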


Before providing a detailed proof, we first present an outline of the proof, which singles out the key inequalities. 

\noindent \textbf{Outline of the proof.} The core of the proof is to establish the following two key inequalities
\begin{subequations}
    \begin{align}
    A_k & \geq  \frac{k^2}{4} ,\;\forall k \geq 1, \label{eq:key-1} \\
        A_kf(x_k) + \frac{\rho}{2}\|x - z_k\|^2 & \leq A_kf(x) + \frac{\rho}{2}\|x - z_0\|^2, \quad \forall x \in \mathbb{R}^n. \label{eq:key-2}
    \end{align}
\end{subequations}
Plugging $x = \argmin_{y \in S} \|y - z_0\|$ into \cref{eq:key-2} and using \cref{eq:key-1} yields the desired result 
\begin{align*}
    f(x_k) - \min_{x} f(x)  \leq \frac{\rho \Dist(z_0,S)^2}{2 A_k } \leq \frac{2\rho \Dist(z_0,S)^2}{k^2}, \; \forall k \geq 1.
\end{align*} \hfill $\square$

In the next two subsections, we prove the two key inequalities \cref{eq:key-1,eq:key-2}.

\subsection{Proof of \cref{eq:key-1}}
Recall that the update of $a_{k}$ gives
    \begin{equation}
        a_{k } = \frac{1}{2} + \frac{\sqrt{1 + 4A_k}}{2}  \geq \frac{1}{2} + \sqrt{A_k}, \label{eq:a-k-bound}
    \end{equation}
The update $A_{k + 1} = A_k + a_{k}$ along with \Cref{eq:a-k-bound} gives
    \begin{align}
        A_{k + 1} & \geq A_k + \frac{1}{2} +  \sqrt{A_k} \geq A_k + \sqrt{A_k}+ \frac{1}{4}   = \left(\sqrt{A_k} + \frac{1}{2}\right)^2 \notag \\
        \Longrightarrow \quad \sqrt{A_{k + 1}} & \geq \sqrt{A_k} + \frac{1}{2}, \label{eq:A-k-recur}
    \end{align}
where we took squareroots in \Cref{eq:A-k-recur}. Adding \Cref{eq:A-k-recur} from $i = 0$ to $ k - 1$ with $A_0 = 0$ shows
    \begin{equation*}
    \sqrt{A_k} \geq \frac{k}{2} \quad \Rightarrow \quad A_k \geq \frac{k^2}{4}.
    \end{equation*} \hfill $\square$

\vspace{1 mm}

\subsection{Proof of \cref{eq:key-2}}
For notational convenience, given $k \geq 0$, we define the following functions
\begin{align}
    \gamma_{k+1}(x) := f(x_{k+1}) + \langle g_{k + 1}, x -x_{k+1} \rangle - \epsilon_{k+1},  \\
    \Gamma_0 \equiv 0, \quad \Gamma_{k + 1} = \frac{A_k}{A_{k + 1}}\Gamma_k + \frac{a_{k }}{A_{k + 1}}\gamma_{k + 1}, \label{eq:Gamma-k-def}
\end{align}
as well as the quantity
\begin{equation}
    \beta_k = \inf_{z \in \mathbb{R}^n}\left(A_k\Gamma_k(z) + \frac{\rho}{2}\|z - z_0\|^2\right) - A_kf(x_k). \label{eq:beta-k-def}
\end{equation}
The function $\gamma_{k + 1}$ is nothing but an under-estimator of $f$, i.e., $\gamma_{k + 1} \leq f$, as  $g_{k + 1} \in \partial_{\epsilon_{k+1}}f(x_{k+1})$. The function $\Gamma_{k+1}$ is a convex combination of $\Gamma_{k}$ and $\gamma_{k+1}$. The quantity $\beta_k$ will serve as an important element in the proof.

Below, we establish some useful properties for $\gamma_k$ and $\Gamma_{k}$.

\begin{lem}[{\citet[Lemma 3.2]{monteiro2013accelerated}}] \label{lemma:affine-maps} For integers $k \geq 0$, the following hold:
    \begin{enumerate}[label=(\alph*)] 
    \item $\gamma_{k + 1}$ is affine and $\gamma_{k + 1} \leq f$,
   \item $\Gamma_k$ is affine and $A_k\Gamma_k \leq A_kf$,
   \item $z_k = \argmin_{z \in \mathbb{R}^n} \{ A_k\Gamma_k(z) + \frac{\rho}{2}\|z - z_0\|^2\}$,
   \item $A_k\Gamma_k(x)  + \frac{\rho}{2}\|x - z_0\|^2 = A_k\Gamma_k(z_k)  + \frac{\rho}{2}\|z_k - z_0\|^2 + \frac{\rho}{2}\|x - z_k\|^2, \forall x \in \RR^n$.
    \end{enumerate}
\end{lem}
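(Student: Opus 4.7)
The plan is to handle the four claims in order, since each one builds on its predecessors, and to carry out parts (c) and (d) by induction on $k$.

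Parts (a) and (b) will be by direct unwrapping of the definitions. For (a), $\gamma_{k+1}$ is affine by inspection of its defining formula, and the inequality $\gamma_{k+1} \leq f$ is exactly the definition of $v_k \in \partial_{\epsilon_{k+1}} f(x_{k+1})$ recalled in \cref{eq:inexact-subdifferential}. For (b), I would induct using the recursion $\Gamma_{k+1} = (A_k/A_{k+1})\Gamma_k + (a_k/A_{k+1})\gamma_{k+1}$. Since $A_{k+1} = A_k + a_k$, the coefficients form a convex combination, so affineness is preserved; multiplying the recursion by $A_{k+1}$ and using the inductive hypothesis $A_k\Gamma_k \leq A_k f$ together with $\gamma_{k+1}\leq f$ from (a) yields $A_{k+1}\Gamma_{k+1} \leq A_{k+1} f$.

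For (c), the base case $k=0$ is trivial, since $\Gamma_0 \equiv 0$ reduces the objective to $(\rho/2)\|z-z_0\|^2$, which is clearly minimized by $z_0$. For the inductive step, the key point is that $\Gamma_k$ is affine by (b), so its gradient is a constant vector, and the first-order optimality condition for the $k$-th problem is equivalent to the identity $\rho(z_0 - z_k) = A_k \nabla \Gamma_k$. Differentiating the recursion for $\Gamma_{k+1}$ and using $\nabla \gamma_{k+1} = v_k$ yields $A_{k+1}\nabla\Gamma_{k+1} = A_k \nabla\Gamma_k + a_k v_k$. Substituting the inductive hypothesis and the algorithmic update $z_{k+1} = z_k - a_k(y_k - x_{k+1})$ together with the identity $y_k - x_{k+1} = v_k/\rho$ (which follows from the sixth line of \cref{alg:A-HPE-simplified}) into $A_{k+1}\nabla\Gamma_{k+1} + \rho(z_{k+1} - z_0)$ will show this quantity vanishes, closing the induction.

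Finally, (d) would follow immediately from (c): the function $x \mapsto A_k\Gamma_k(x) + (\rho/2)\|x - z_0\|^2$ is a quadratic with constant Hessian $\rho I$ and minimizer $z_k$, so its exact second-order Taylor expansion around $z_k$ (equivalently, completion of squares using affineness of $\Gamma_k$) yields the stated identity. The main obstacle I expect is the bookkeeping in part (c)---specifically, verifying that the momentum update $z_{k+1} = z_k - a_k(y_k - x_{k+1})$ is precisely the one that preserves the first-order optimality condition as $k$ advances; once $y_k - x_{k+1}$ is rewritten as $v_k/\rho$, the cancellation is clean and the rest of the lemma falls out mechanically.
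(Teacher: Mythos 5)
Your proposal is correct and follows essentially the same route as the paper. The only cosmetic difference is in part (c): the paper first unwinds the recursion into the explicit sums $z_k = z_0 - \sum_{i=1}^k a_{i-1}v_{i-1}/\rho$ and $A_k \nabla\Gamma_k = \sum_{i=1}^k a_{i-1}v_{i-1}$ (proving the latter by induction) and then matches them against the optimality condition, whereas you maintain the first-order optimality identity $A_k\nabla\Gamma_k + \rho(z_k - z_0) = 0$ directly as the inductive invariant; these are two presentations of the same computation, and your cancellation $a_k v_k - \rho a_k(y_k - x_{k+1}) = 0$ is exactly the algebraic content of the paper's telescoped sums.
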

\begin{proof}
    \begin{enumerate}[label=(\alph*)] 
    \item For $k \geq 0$, $\gamma_{k + 1}$ is affine as it is the sum of a linear function and a constant, and $\gamma_{k + 1} \leq f$ as the definition 
    $ \gamma_{k+1}(x) = f(x_{k+1}) + \langle g_{k + 1}, x -x_{k+1} \rangle - \epsilon_{k+1}$ and 
    $g_{k +1} \in \partial_{\epsilon_{k+1}}f(x_{k+1})$. 

    \item As $\Gamma_k$ is a linear combination of $\{\gamma_i\}_{i = 1}^k$ for $k \geq 1$, we know that $\Gamma_k$ is affine by (a). We now show that $A_k\Gamma_k \leq A_kf$ by induction.
    \begin{itemize}
        \item Note that $\Gamma_0 = 0$ and $A_0 = 0$, so $A_k\Gamma_k \leq A_kf$ for $k = 0$ holds trivially.
        \item Suppose that $A_k\Gamma_k \leq A_kf$ for some $k > 0$. By the update \Cref{eq:Gamma-k-def}, we have that
    \begin{align*}
        A_{k + 1}\Gamma_{k + 1} & = A_k\Gamma_k + a_{k }\gamma_{k + 1}\\
        & \leq A_kf + a_{k}f = A_{k + 1}f
    \end{align*}
    where the inequality follows from the induction hypothesis and (a), and the last equality uses the update $A_{k+1} = A_k + a_k$.    
    \end{itemize}
     \item From the update of $z_k$ and assumption $z_0 = x_0$, we can rewrite $z_k$ as
     \begin{align}
     \label{eq:unwrap}
         z_k = z_0 - \sum_{i = 1}^k a_{i-1}g_{i-1}/\rho.
     \end{align}
     The claim would be established if we could show 
    \begin{equation}
    A_k \nabla \Gamma_k(x) = \sum_{i =1 }^{k} a_{i-1} g_{i-1} \quad \text{for }k \geq 1,  \label{eq:A-k-grad}
    \end{equation}
    since \cref{eq:unwrap,eq:A-k-grad} lead to 
    \begin{align*}
        z_k = x_0 - A_k \nabla \Gamma_k (z_k)/\rho,
    \end{align*}
    which is exactly the optimality condition of $z_k = \argmin_{z \in \mathbb{R}^n} \{ A_k\Gamma_k(z) + \frac{\rho}{2}\|z - z_0\|^2\}$.
    
    Below, we establish \cref{eq:A-k-grad} by induction.
    \begin{itemize}
        \item Notice that $a_0 = A_1 = 1$ and $\nabla \Gamma_1(x) = g_0$, so  \Cref{eq:A-k-grad} holds for $k = 1$.
        \item  Assume that \Cref{eq:A-k-grad} holds for some $k > 1$. Invoking \Cref{eq:Gamma-k-def} once more, we have that
    \begin{align*}
        A_{k + 1}\nabla \Gamma_{k + 1} & = A_k\nabla \Gamma_k  + a_{k}\nabla \gamma_{k + 1}\\
        & = \sum_{i = 1}^ka_{i-1}g_{i-1} + a_{k}g_{k } = \sum_{i = 1}^{k + 1}a_{i-1}g_{i-1}.
    \end{align*}
    
    \end{itemize}
    This proves (c).
    \item  By (b), $\Gamma_k$ is affine so there is a decomposition
    \begin{equation}
        A_k \Gamma_k(x) + \frac{\rho}{2}\|x - x_0\|^2= \langle s_k, x\rangle + c_k + \frac{\rho }{2}\|x - x_0\|^2 \quad \text{for some }s_k, c_k \in \mathbb{R}^n. \label{eq:Gamma-k-affine}
    \end{equation}
    The function above has the minimum $z_k$ by (c), where we necessarily have $z_k = z_0 - s_k/\rho$. 
    It follows that
    \begin{align}
        A_k\Gamma_k(x) + \frac{\rho}{2}\|x - z_0\|^2 & = \langle s_k, x\rangle + c_k + \frac{\rho}{2}\|x - z_k\|^2 - \langle s_k, x - z_k\rangle + \frac{1}{2\rho}\|s_k\|^2 \label{eq:Gamma-k-expand}\\
        & = \langle s_k, z_k\rangle + c_k + \frac{\rho}{2}\|x - z_k\|^2 + \frac{\rho}{2}\|z_k - x_0\|^2 \label{eq:Gamma-k-collect}\\
        & = A_k\Gamma_k(z_k) + c_k + \frac{\rho}{2}\|x - z_k\|^2 + \frac{\rho}{2}\|z_k - x_0\|^2, \label{eq:Gamma-k-final}
    \end{align}
    where \Cref{eq:Gamma-k-expand} uses the decomposition \Cref{eq:Gamma-k-affine} and $z_k = z_0 - s_k/\rho$, \Cref{eq:Gamma-k-collect} collects terms and applies the relation $z_k = z_0 - s_k/\rho$ again, and \Cref{eq:Gamma-k-final} recalls \Cref{eq:Gamma-k-affine}. This completes the proof of (d).
    \end{enumerate}
\end{proof}

Points (a) and (b) in \cref{lemma:affine-maps} show that the functions $\gamma_{k+1}$ and $\Gamma_k$ are affine and lower than the function $f$. Point (c) reveals that the iterate $z_k$ is in fact the minimizer of the function $A_k \Gamma_k(\cdot) + \frac{\rho}{2}\|\cdot - z_0\|^2.$ Point (d) is an identity that will be useful later.

The next lemma establishes another useful identity.
\begin{lem} \label{lemma:seq-relation}
    The sequences $\{a_k\}$ and $\{A_k\}$ in \cref{alg:A-HPE-simplified} satisfy $A_{k + 1} = a_{k }^2$ for all $k \geq 0$.
\end{lem}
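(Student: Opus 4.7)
The plan is to prove this identity by direct algebraic manipulation of the defining recursion for $a_k$, with no need for induction. The key observation is that the formula $a_k = (1 + \sqrt{1 + 4A_k})/2$ is exactly the positive root of a particular quadratic equation in $a_k$, and unpacking that quadratic will deliver the claim immediately.

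First I would rewrite the defining relation $a_k = \tfrac{1}{2} + \tfrac{1}{2}\sqrt{1 + 4A_k}$ as $2a_k - 1 = \sqrt{1 + 4A_k}$. Since $A_k \geq 0$, we have $2a_k - 1 \geq 1 > 0$, so squaring both sides is reversible and yields $(2a_k-1)^2 = 1 + 4A_k$, i.e., $4a_k^2 - 4a_k + 1 = 1 + 4A_k$. Cancelling the constant and dividing by $4$ gives the compact identity $a_k^2 - a_k = A_k$, equivalently $a_k^2 = A_k + a_k$.

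The final step is simply to recognize the right-hand side as $A_{k+1}$ via the update rule $A_{k+1} = A_k + a_k$ in Line~4 of \cref{alg:A-HPE-simplified}, which completes the proof for all $k \geq 0$. I do not anticipate any obstacle here: the statement is essentially a restatement of the quadratic relation used to define $a_k$, and the choice of the formula for $a_k$ in the algorithm is precisely engineered so that this identity holds. The only subtlety worth flagging is that one must use the positive branch of the square root, which is guaranteed by the explicit formula for $a_k$ in the algorithm.
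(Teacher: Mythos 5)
Your proof is correct and is essentially the same argument as the paper's: both observe that the formula for $a_k$ makes it the positive root of $a^2 - a - A_k = 0$, so $a_k^2 = A_k + a_k = A_{k+1}$. You simply carry out the squaring explicitly, which the paper leaves implicit.
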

\begin{proof}
    From the update of $a_k$, it is clear that $a_k$ is a positive root of the equation 
    \begin{align*}
        a^2  - a -A_k  = 0,
    \end{align*}
    where $A_k$ is given. Plugging $a_k$ into the above equation and rearranging terms gives us
     $   a^2_k = A_k + a_k = A_{k+1},$ where the last equality comes from the update of $A_{k+1}$.
\end{proof}

We next review an identity without proof, as it is independent of the algorithm. 
\begin{lem}[{\citet[Lemma 3.3]{monteiro2013accelerated}}] \label{lemma:inexact-lem}Take $\tilde{x},\tilde{y},\tilde{g} \in \mathbb{R}^n$ and $\rho,\epsilon > 0$. Then the inequality
\begin{equation}
    \|\frac{1}{\rho}\tilde{g} + \tilde{y} - \tilde{x}\|^2 + \frac{2}{\rho}\epsilon \leq \|\tilde{x} - \tilde{y}\|^2 \label{eq:inexact-crit}
\end{equation}
holds if and only if
\begin{equation*}
    \min_{x \in \mathbb{R}^n}\left\{\langle\tilde{g}, x - \tilde{y}\rangle - \epsilon + \frac{\rho}{2}\|x - \tilde{x}\|^2\right\} \geq 0. \label{eq:inexact-equiv}
\end{equation*}
\end{lem}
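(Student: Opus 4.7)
The plan is to reduce both conditions in the equivalence to a single scalar inequality obtained by explicitly minimizing the quadratic that appears on the right-hand side of the stated equivalence. First, I would observe that the objective $h(x) := \langle \tilde{g}, x - \tilde{y}\rangle - \epsilon + \frac{\rho}{2}\|x - \tilde{x}\|^2$ is strongly convex in $x$ since $\rho > 0$, so it admits a unique minimizer characterized by the first-order condition $\nabla h(x) = \tilde{g} + \rho(x - \tilde{x}) = 0$. Solving this gives the closed-form minimizer $x^\star = \tilde{x} - \tilde{g}/\rho$.

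Second, I would substitute $x^\star$ back into $h$ and simplify (equivalently, complete the square) to obtain
\[
\min_{x \in \mathbb{R}^n} h(x) \;=\; h(x^\star) \;=\; \langle \tilde{g}, \tilde{x} - \tilde{y}\rangle - \frac{1}{2\rho}\|\tilde{g}\|^2 - \epsilon.
\]
Consequently, the condition $\min_x h(x) \geq 0$ is equivalent to
\[
\langle \tilde{g}, \tilde{x} - \tilde{y}\rangle \;\geq\; \frac{1}{2\rho}\|\tilde{g}\|^2 + \epsilon.
\]

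Third, I would reduce \cref{eq:inexact-crit} to the very same scalar inequality. Expanding the squared norm on the left-hand side of \cref{eq:inexact-crit} via
\[
\Big\|\tfrac{1}{\rho}\tilde{g} + \tilde{y} - \tilde{x}\Big\|^2 \;=\; \tfrac{1}{\rho^2}\|\tilde{g}\|^2 - \tfrac{2}{\rho}\langle \tilde{g}, \tilde{x} - \tilde{y}\rangle + \|\tilde{x} - \tilde{y}\|^2,
\]
substituting into \cref{eq:inexact-crit}, cancelling the common term $\|\tilde{x} - \tilde{y}\|^2$ on both sides, and finally multiplying through by $\rho/2 > 0$ produces exactly the inequality displayed at the end of the previous paragraph. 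Since \cref{eq:inexact-crit} and the minimum condition are both equivalent to the same scalar inequality, they are equivalent to each other, establishing both directions of the lemma simultaneously.

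The argument is essentially a direct calculation (completing the square combined with a single algebraic identity), so I do not anticipate any substantial obstacle; the only point requiring care is to keep track of signs and of the factors of $\rho$ and $1/\rho$ consistently through the two reductions, ensuring that both expand to the same final inequality.
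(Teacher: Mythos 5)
The paper itself does not supply a proof of this lemma; it is stated and cited from \citet[Lemma 3.3]{monteiro2013accelerated} ``without proof, as it is independent of the algorithm.'' Your argument is correct and is essentially the standard derivation: the minimizer of the strongly convex quadratic is $x^\star = \tilde{x} - \tilde{g}/\rho$, and a direct substitution gives $\min_x h(x) = \langle \tilde{g}, \tilde{x}-\tilde{y}\rangle - \frac{1}{2\rho}\|\tilde{g}\|^2 - \epsilon$, so the nonnegativity condition reads $\langle \tilde{g}, \tilde{x}-\tilde{y}\rangle \geq \frac{1}{2\rho}\|\tilde{g}\|^2 + \epsilon$. Expanding $\|\frac{1}{\rho}\tilde{g}+\tilde{y}-\tilde{x}\|^2$, cancelling $\|\tilde{x}-\tilde{y}\|^2$, and scaling by $\rho/2$ reduces \cref{eq:inexact-crit} to that same inequality, so the two conditions are equivalent. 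The algebra you flagged (signs and the $\rho$ versus $1/\rho$ factors) checks out; there is no gap. This is also the same completing-the-square reduction used in the source paper, so your proof is a legitimate stand-in for the omitted argument.
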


Recall the inexactness condition \cref{eq:A-HPE-condition} in \cref{alg:A-HPE-simplified}:
\begin{align*}
    x_{k+1} = y_k - \frac{1}{\rho} g_{k + 1}, \; g_{k + 1} \in \partial_{\epsilon_{k+1}}f(x_{k+1}),  \; 2 \epsilon_{k+1} \leq \rho \| x_{k+1} - y_k\|^2, \; \forall k \geq 0.
\end{align*}
Using \cref{lemma:inexact-lem} with $\tilde{g} = g_{k + 1}$, $\tilde{x} = y_k$ and $\tilde{y} = x_{k+1}$, we then know that 
\begin{align}
    \label{eq:nonnegative}
     \min_{x \in \mathbb{R}^n}\left\{\langle g_{k + 1}, x -x_{k+1}\rangle - \epsilon_{k+1} + \frac{\rho}{2}\|x - x_{k+1}\|^2\right\} \geq 0.
\end{align}
The following lemma establishes the nondecreasing property of the sequence $\{\beta_k\}$.

\begin{lem}
The sequence $\{\beta_k\}_{k = 1}^{\infty}$ defined in \cref{eq:beta-k-def} satisfies $0\leq \beta_k \leq \beta_{k + 1}, \forall k \geq 0$.
\end{lem}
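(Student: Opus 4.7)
The plan is to prove both claims by induction. The base case $\beta_0 = 0$ is immediate: since $A_0 = 0$ and $\Gamma_0 \equiv 0$, the definition \cref{eq:beta-k-def} reduces to $\beta_0 = \inf_{z \in \RR^n} \frac{\rho}{2}\|z-z_0\|^2 = 0$, attained at $z = z_0$. Hence the whole statement reduces to proving the monotonicity $\beta_{k+1} \geq \beta_k$ for every $k \geq 0$, which together with $\beta_0 = 0$ yields $0 \leq \beta_k \leq \beta_{k+1}$.

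First I would apply \cref{lemma:affine-maps}(d) to rewrite $\beta_k = A_k \Gamma_k(z_k) + \frac{\rho}{2}\|z_k - z_0\|^2 - A_k f(x_k)$, and expand $\beta_{k+1}$ via the recursion $A_{k+1}\Gamma_{k+1} = A_k \Gamma_k + a_k \gamma_{k+1}$ combined once more with \cref{lemma:affine-maps}(d). The affine nature of $\gamma_{k+1}$ from \cref{lemma:affine-maps}(a) makes the inner infimum $\inf_x \{ a_k \gamma_{k+1}(x) + \frac{\rho}{2}\|x - z_k\|^2 \}$ strictly convex quadratic in $x$; its first-order condition matches the update $z_{k+1} = z_k - a_k v_k / \rho$ exactly, so the minimum is attained at $z_{k+1}$. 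Subtracting then gives $\beta_{k+1} - \beta_k = a_k \gamma_{k+1}(z_{k+1}) + \frac{\rho}{2}\|z_{k+1} - z_k\|^2 - A_{k+1} f(x_{k+1}) + A_k f(x_k)$.

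The next step is the key algebraic identity $a_k(z_{k+1} - x_{k+1}) = A_k(x_{k+1} - x_k)$, which I would derive by eliminating $z_k$ between $A_{k+1} y_k = A_k x_k + a_k z_k$, $x_{k+1} = y_k - v_k / \rho$, and $z_{k+1} = z_k - a_k v_k / \rho$, using \cref{lemma:seq-relation} ($A_{k+1} = a_k^2$) to cancel the $v_k/\rho$ contributions. Plugging this into $a_k \gamma_{k+1}(z_{k+1}) = a_k f(x_{k+1}) - a_k \epsilon_{k+1} + a_k \langle v_k, z_{k+1} - x_{k+1}\rangle$, and bounding $A_k \langle v_k, x_{k+1} - x_k \rangle \geq A_k(f(x_{k+1}) - f(x_k) - \epsilon_{k+1})$ via the $\epsilon$-subgradient inclusion $v_k \in \partial_{\epsilon_{k+1}} f(x_{k+1})$, the $A_{k+1} f(x_{k+1})$ and $A_k f(x_k)$ terms cancel cleanly. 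Using $\|z_{k+1} - z_k\|^2 = a_k^2 \|v_k\|^2/\rho^2 = A_{k+1}\|v_k\|^2/\rho^2$, the residual simplifies to $\beta_{k+1} - \beta_k \geq A_{k+1}\bigl( \frac{\|v_k\|^2}{2\rho} - \epsilon_{k+1} \bigr)$, which is nonnegative precisely by the relative-error bound $2\epsilon_{k+1} \leq \rho \|x_{k+1} - y_k\|^2 = \|v_k\|^2/\rho$ in \cref{eq:A-HPE-condition}.

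I expect the main obstacle to be discovering and verifying the algebraic identity $a_k(z_{k+1}-x_{k+1}) = A_k(x_{k+1}-x_k)$: it is this identity, made possible only by the choice $A_{k+1} = a_k^2$ from \cref{lemma:seq-relation}, that converts the $\epsilon$-subgradient bound at $x_{k+1}$ (tested against $x_k$) into a bound on $a_k\langle v_k, z_{k+1}-x_{k+1}\rangle$ that appears in the residual. Once this identity is in hand, the remaining steps are routine $\epsilon$-subgradient bookkeeping and a final appeal to the relative-error condition \cref{eq:A-HPE-condition}.
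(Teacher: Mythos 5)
Your proof is correct, but it follows a genuinely different path from the paper's. The paper's argument introduces the auxiliary variable $\tilde{y} = \frac{A_k}{A_{k+1}}x_k + \frac{a_k}{A_{k+1}}y$ so that the change of variables inside the infimum, combined with the scalar identity from \cref{lemma:inexact-lem}, yields $\gamma_{k+1}(\tilde{y}) + \frac{\rho}{2}\|\tilde{y} - y_k\|^2 \geq f(x_{k+1})$ directly; the $\epsilon$-subgradient property is never tested at a specific point. Your route instead exploits \cref{lemma:affine-maps}(c)--(d) to identify $z_{k+1}$ as the exact minimizer of $a_k\gamma_{k+1}(\cdot) + \tfrac{\rho}{2}\|\cdot - z_k\|^2$, derives the vector identity $a_k(z_{k+1} - x_{k+1}) = A_k(x_{k+1} - x_k)$ (which, as you note, is precisely the place where $A_{k+1} = a_k^2$ from \cref{lemma:seq-relation} enters), and then applies the $\epsilon$-subgradient inequality at the single test point $y = x_k$. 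This bypasses \cref{lemma:inexact-lem} entirely and replaces the change-of-variables trick with explicit bookkeeping of where the minimum sits and how the three iterate sequences interleave. In spirit your approach is closer to the paper's alternative potential-function proof in \cref{section:potential}: the combination of two $\epsilon$-subgradient inequalities (one at arbitrary $x$, one at $x_k$) and a vector identity driven by $A_{k+1} = a_k^2$ is exactly the mechanism of \cref{lemma:key-identity}. Your version is arguably more self-contained (no appeal to the auxiliary equivalence lemma), while the paper's change-of-variables argument is shorter on paper because it packages the algebra into the infimum; both reduce to the same relative-error slack $\frac{\|v_k\|^2}{2\rho} - \epsilon_{k+1} \geq 0$.
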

\begin{proof}
    As $A_0 = 0$, one easily has that
    \begin{equation*}
        \beta_0 = \inf_{x \in \mathbb{R}^n} \left\{\frac{\rho}{2}\|x - x_0\|^2 \right\} = 0.
    \end{equation*}
    We move on to proving $\beta_{k + 1} \geq \beta_k$ when $k > 0$. For any $y \in \mathbb{R}^n$, define
    \begin{equation*}
        \tilde{y} := \frac{A_k}{A_{k + 1}}x_k + \frac{a_{k }}{A_{k + 1}}y.
    \end{equation*}
    By the update of $y_k = \frac{A_k}{A_{k+1}} x_k + \frac{a_{k}}{A_{k+1}} z_k$, the update $A_{k + 1} = A_k + a_{k }$, and the fact that $\gamma_{k + 1}$ is affine, the following two facts hold:
    \begin{align}
        \tilde{y} - y_k & = \frac{a_{k }}{A_{k + 1}}(y - z_k), \label{eq:tilde-x-rel}\\
        \gamma_{k + 1}(\tilde{y}) & = \frac{A_k}{A_{k + 1}}\gamma_{k + 1}(x_k) + \frac{a_{k }}{A_{k + 1}}\gamma_{k + 1}(y). \label{eq:gamma-k+1}
    \end{align}
    By the definition \Cref{eq:Gamma-k-def}, we observe that for all $y$ we have
    \begin{align}
        A_{k + 1}\Gamma_{k + 1}(y) + \frac{\rho}{2}\|y - z_0\|^2 & = a_{k}\gamma_{k + 1}(y) + A_k\Gamma_k(y) + \frac{\rho}{2}\|y - z_0\|^2 \nonumber\\
        & = a_{k }\gamma_{k + 1}(y)  + A_k\Gamma_k(z_k) + \frac{\rho}{2}\|z_k - z_0\|^2 + \frac{\rho}{2}\|y - z_k\|^2 \label{eq:rewrite}\\
        & = a_{k}\gamma_{k + 1}(y) + A_kf(x_k) + \beta_k + \frac{\rho}{2}\|y - z_k\|^2, \label{eq:beta-k-sup}
    \end{align}
    where \cref{eq:rewrite} applies (d) in \Cref{lemma:affine-maps}, and \Cref{eq:beta-k-sup} is due to (c) of \Cref{lemma:affine-maps} and the definition of $\beta_k$ \cref{eq:beta-k-def}. Now, as $\gamma_{k + 1} \leq f$ by \Cref{lemma:affine-maps}, we may write
    \begin{align}
        A_{k + 1}\Gamma_{k + 1}(y) + \frac{\rho}{2}\|y - z_0\|^2 & \geq \beta_k + a_{k }\gamma_{k + 1}(y) + A_k\gamma_{k + 1}(x_k) + \frac{\rho}{2}\|y - z_k\|^2 \nonumber
        \\
        & = \beta_k + A_{k + 1}\gamma_{k + 1}(\tilde{y}) + \frac{\rho A_{k+1}^2 }{2a_{k }^2}\|\tilde{y} -y_k\|^2 
        \nonumber
        \\
        & = \beta_k + A_{k + 1}\gamma_{k + 1}(\tilde{y})+ \frac{\rho A_{k + 1}}{2}\|\tilde{y} - y_k\|^2, \label{eq:lemma-res}
    \end{align}
    where the first equality applies \Cref{eq:tilde-x-rel,eq:gamma-k+1}, and the second equality uses $A_{k+1} = a_k^2$ from \Cref{lemma:seq-relation}. Evaluating $\gamma_{k + 1}$ at $\tilde{y}$ and applying \cref{eq:nonnegative} shows
    \begin{align}
         \gamma_{k + 1}(\tilde{y}) + \frac{\rho}{2}\|\tilde{y} - y_k\|^2
        & = f(x_{k + 1}) + \left(\langle g_{k + 1}, \tilde{y} -x_{k + 1} \rangle - \epsilon_{k + 1} + \frac{\rho}{2}\|\tilde{y} - y_k\|^2 \right) \nonumber\\
        & \geq f(x_{k + 1}). \label{eq:gamma-k-bound}
    \end{align}
    As $A_{k + 1}$ is nonnegative, substituting \Cref{eq:gamma-k-bound} into \Cref{eq:lemma-res} and taking the infimum over $y$ shows
    \begin{equation*}
    \begin{aligned}
        \beta_k + A_{k + 1}f(x_{k + 1} ) & \leq \inf_{y \in \mathbb{R}^n}\left(A_{k + 1}\Gamma_{k + 1}(y) + \frac{\rho}{2}\|y-z_0\|^2 \right) \\
    &  = \beta_{k+1} +A_{k + 1}f(x_{k + 1}) ,
    \end{aligned}
    \end{equation*}
    where the equality is from \cref{eq:beta-k-def}. Subtracting $A_{k+1}f(x_{k+1}) $ on both sides of the above inequality finishes the proof.
\end{proof}

We are ready to fully establish \cref{eq:key-2}. As $\beta_k \geq 0$, it follows that
\begin{equation*}
    \begin{aligned}
        A_kf(x_k)  & \leq \inf_{x' \in \mathbb{R}^n}\left(A_k\Gamma_k(x')  + \frac{\rho}{2}\|x' - z_0\|^2\right) \\
        & = A_k\Gamma_k(z_k)  + \frac{\rho}{2}\|z_k - z_0\|^2,
    \end{aligned}
\end{equation*}
where we applied (c) of \Cref{lemma:affine-maps} in the second line. Adding the quadratic $\frac{\rho}{2}\|x - z_k\|^2$ on both sides of the equation above yields the relation
    \begin{equation*}
    \begin{aligned}
        A_kf(x_k)  +\frac{\rho}{2}\|x - z_k\|^2 & \leq  A_k\Gamma_k(z_k)  + \frac{\rho}{2}\|z_k - z_0\|^2 +\frac{\rho}{2}\|x - z_k\|^2 \\
        & = A_k\Gamma_k(x)+ \frac{\rho}{2}\|x - z_0\|^2 \\
        & \leq A_kf(x)+ \frac{\rho}{2}\|x - z_0\|^2,
    \end{aligned}
    \end{equation*}
    where the equality in the second line applies (d) of \cref{lemma:affine-maps}, and the last line uses part (b) of \cref{lemma:affine-maps}.

\section{Alternative potential function-based proof for \cref{thm:monteiro-conv-main}}
\label{section:potential}
This section provides an alternative proof for the convergence of \cref{alg:A-HPE-simplified} (i.e., \cref{thm:monteiro-conv-main}). Our proof is motivated by \citet[Section 5.3]{d2021acceleration}. We use a potential function argument.
\begin{lem}
    \label{lemma:potential}
    Consider \cref{alg:A-HPE-simplified}. For all $x \in \mathbb{R}^n$ and $k \geq 1$, the following holds:
    \begin{align}
        \label{eq:potential}
        A_{k+1}(f(x_{k+1}) - f(x)) + \frac{\rho}{2}\|z_{k+1} - x\|^2 \leq A_{k}(f(x_{k}) - f(x)) + \frac{\rho}{2}\|z_{k} - x\|^2.
    \end{align}
\end{lem}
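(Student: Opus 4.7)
The plan is to derive the potential inequality by combining two inexact subgradient inequalities for $v_k$ at $x_{k+1}$ with the algorithm's extrapolation rule, and then to absorb the error term $A_{k+1}\epsilon_{k+1}$ using the relative-accuracy criterion in \cref{eq:A-HPE-condition} together with the key identity $a_k^2 = A_{k+1}$ from \cref{lemma:seq-relation}.

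First, I would invoke $v_k \in \partial_{\epsilon_{k+1}} f(x_{k+1})$ twice, evaluating it at the reference point $x$ and at the previous iterate $x_k$, and then form the convex-like combination that multiplies the first inequality by $a_k$ and the second by $A_k$. Because $A_{k+1}=A_k+a_k$, summing these two produces
\begin{align*}
A_{k+1}\bigl(f(x_{k+1})-f(x)\bigr)-A_k\bigl(f(x_k)-f(x)\bigr)\leq \langle v_k,\, A_{k+1}x_{k+1}-A_k x_k - a_k x\rangle + A_{k+1}\epsilon_{k+1}.
\end{align*}

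Next, I would rewrite the inner product in terms of the $z$-sequence. Using the extrapolation $A_{k+1}y_k=A_kx_k+a_kz_k$, the identity $v_k=\rho(y_k-x_{k+1})$ from line 6 of \cref{alg:A-HPE-simplified}, and the update $z_{k+1}=z_k-a_k(y_k-x_{k+1})$, a short algebraic manipulation (invoking $a_k^2=A_{k+1}$) would show that $A_{k+1}x_{k+1}-A_kx_k-a_kx=a_k(z_{k+1}-x)$ and $a_k v_k=\rho(z_k-z_{k+1})$. Hence the inner product becomes $\rho\langle z_k-z_{k+1},\,z_{k+1}-x\rangle$, which the polarization identity rewrites as $\tfrac{\rho}{2}\bigl(\|z_k-x\|^2-\|z_{k+1}-x\|^2-\|z_k-z_{k+1}\|^2\bigr)$.

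The main obstacle is controlling the residual $A_{k+1}\epsilon_{k+1}$. Here the inexactness condition $2\epsilon_{k+1}\leq \rho\|x_{k+1}-y_k\|^2$ in \cref{eq:A-HPE-condition} is exactly tight for this purpose: since $z_k-z_{k+1}=a_k(y_k-x_{k+1})$ and $a_k^2=A_{k+1}$, I would obtain $\|z_k-z_{k+1}\|^2=A_{k+1}\|y_k-x_{k+1}\|^2$, so
\begin{align*}
A_{k+1}\epsilon_{k+1}\leq \tfrac{\rho}{2}A_{k+1}\|y_k-x_{k+1}\|^2=\tfrac{\rho}{2}\|z_k-z_{k+1}\|^2,
\end{align*}
which exactly cancels the $-\tfrac{\rho}{2}\|z_k-z_{k+1}\|^2$ term produced by polarization. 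Moving $\tfrac{\rho}{2}\|z_{k+1}-x\|^2$ to the left-hand side then yields \cref{eq:potential}. This also explains why the factor $a_k$ in the definition of $z_{k+1}$ and the choice $A_{k+1}=a_k^2$ are precisely what is needed to convert the inexact proximal tolerance into a telescoping potential.
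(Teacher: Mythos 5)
Your proof is correct and follows essentially the same route as the paper's: the same two $\epsilon_{k+1}$-subgradient inequalities combined with weights $a_k$ and $A_k$, the same algebraic reduction of the resulting inner product using the extrapolation rule and $a_k^2=A_{k+1}$ (this is exactly the content of the paper's \cref{lemma:key-identity}), and the same absorption of the error $A_{k+1}\epsilon_{k+1}$ via \cref{eq:A-HPE-condition}. The only cosmetic difference is that you write the residual quadratic as $\tfrac{\rho}{2}\|z_k-z_{k+1}\|^2$ where the paper writes the equal quantity $\tfrac{A_{k+1}}{2\rho}\|v_k\|^2$.
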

Before presenting the proof of \cref{lemma:potential}, we demonstrate how to use \cref{lemma:potential} to obtain \cref{thm:monteiro-conv-main}. Summing \cref{eq:potential} from $0$ to $k-1$ with $x = \argmin_{x \in S} \|x_0 - x\| $ and $S = \argmin_{x} f(x)$ yields
\begin{align*}
    A_{k}(f(x_{k}) - \min_{x} f(x))  \leq \frac{\rho}{2}\Dist(x_0,S)^2.
\end{align*}
Then, using \cref{eq:key-1} (i.e., $A_k \geq \frac{k^2}{4}, \forall k \geq 1$), we have
\begin{align*}
    f(x_{k}) - \min_{x} f(x) \leq \frac{2\rho \Dist(x_0,S)^2}{k^2}, \; \forall k \geq 1.
\end{align*}

Next, we prove an identity that is commonly used in the analysis of acceleration methods.
\begin{lem}[Key identity]
        \label{lemma:key-identity}
         Consider \cref{alg:A-HPE-simplified}. It holds that for all $x \in \RR^n$, we have
        \begin{align*}
        a_k\innerproduct{v_k}{x - x_{k+1}} + A_k \innerproduct{v_k}{x_k - x_{k+1}} 
        =  \frac{\rho}{2} \|z_{k+1} - x\|^2  + \frac{A_{k+1}}{2\rho} \|v_k\|^2 - \frac{\rho}{2} \| z_k - x \|^2. 
    \end{align*}
    \end{lem}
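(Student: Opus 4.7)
My plan is to verify the identity by expanding both sides using the three algorithmic updates
$$x_{k+1} = y_k - v_k/\rho, \qquad z_{k+1} = z_k - a_k(y_k - x_{k+1}) = z_k - (a_k/\rho)\, v_k, \qquad y_k = \tfrac{A_k}{A_{k+1}} x_k + \tfrac{a_k}{A_{k+1}} z_k,$$
together with the two recurrence facts $A_{k+1} = A_k + a_k$ and $A_{k+1} = a_k^2$ (the latter from \cref{lemma:seq-relation}). The computation is purely algebraic, so no inequalities or subgradient properties of $f$ are used here; this is just a relation between the iterates.

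For the right-hand side, the plan is to apply the elementary identity $\|a\|^2 - \|b\|^2 = \langle a - b, a + b\rangle$ with $a = z_{k+1} - x$ and $b = z_k - x$, and then substitute $z_{k+1} - z_k = -(a_k/\rho) v_k$. This turns $\tfrac{\rho}{2}\|z_{k+1}-x\|^2 - \tfrac{\rho}{2}\|z_k - x\|^2$ into $-a_k\langle v_k, z_k - x\rangle + \tfrac{a_k^2}{2\rho}\|v_k\|^2$. Adding the remaining term $\tfrac{A_{k+1}}{2\rho}\|v_k\|^2$ and invoking $A_{k+1} = a_k^2$ collapses the right-hand side to
$$\text{RHS} \;=\; a_k \langle v_k, x - z_k\rangle + \frac{a_k^2}{\rho}\|v_k\|^2.$$

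For the left-hand side, the plan is to substitute $x_{k+1} = y_k - v_k/\rho$ inside both inner products, which yields
$$\text{LHS} \;=\; a_k\langle v_k, x - y_k\rangle + A_k\langle v_k, x_k - y_k\rangle + \frac{a_k + A_k}{\rho}\|v_k\|^2,$$
and then to use $A_{k+1} = A_k + a_k = a_k^2$ to identify the coefficient in front of $\|v_k\|^2$ with $a_k^2/\rho$, matching the RHS. Comparing with the reduced RHS, the identity reduces to the single scalar claim
$$a_k\langle v_k, z_k - y_k\rangle + A_k\langle v_k, x_k - y_k\rangle = 0,$$
i.e., $\langle v_k,\; a_k z_k + A_k x_k - A_{k+1} y_k\rangle = 0$.

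This last step is where the extrapolation line of the algorithm is used: the update $y_k = \tfrac{A_k}{A_{k+1}} x_k + \tfrac{a_k}{A_{k+1}} z_k$ is exactly the statement $A_{k+1} y_k = A_k x_k + a_k z_k$, so the bracket vanishes identically. The only potential obstacle is bookkeeping — making sure the cross terms from $x_{k+1} = y_k - v_k/\rho$ and from expanding $\|z_{k+1} - x\|^2 - \|z_k - x\|^2$ combine correctly so that the $\|v_k\|^2$ coefficients match on both sides. The fact $A_{k+1} = a_k^2$ from \cref{lemma:seq-relation} is precisely what makes those coefficients agree, which explains why the specific choice of $a_k$ in \cref{alg:A-HPE-simplified} is essential for the identity (and ultimately for acceleration).
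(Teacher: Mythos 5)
Your proof is correct and rests on exactly the same ingredients as the paper's argument: substituting $x_{k+1} = y_k - v_k/\rho$, invoking the extrapolation relation $A_{k+1} y_k = A_k x_k + a_k z_k$, the recurrences $A_{k+1} = A_k + a_k$ and $A_{k+1} = a_k^2$, and a quadratic expansion of $\|z_{k+1}-x\|^2 - \|z_k - x\|^2$. The only cosmetic difference is organizational — you reduce both sides to a common canonical form and show they agree, whereas the paper works linearly from the left-hand side to the right-hand side using the three-term polarization identity $2\langle v,u\rangle = \|v+u\|^2 - \|v\|^2 - \|u\|^2$ in place of your two-term factoring $\|a\|^2 - \|b\|^2 = \langle a-b, a+b\rangle$.
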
 
    \begin{proof}
    Rearranging step 5 of \cref{alg:A-HPE-simplified}, we see
    $
        A_kx_k  = A_{k+1} y_k -  a_kz_k,
    $
    which leads to
    \begin{align}
        \label{eq:step-1}
        A_k \innerproduct{v_k}{x_k - x_{k+1}} =  \innerproduct{v_k}{A_{k+1} y_k -  a_kz_k - A_kx_{k+1}}.
    \end{align}
    It holds that
    \begin{equation}
        \label{eq:expension}
        \begin{aligned}
        &a_k\innerproduct{v_k}{x - x_{k+1}}  +   A_k \innerproduct{v_k}{x_k - x_{k+1}} \\
        \overset{(a)}{=} \quad & \innerproduct{v_k}{a_k x - a_k x_{k+1} + A_{k+1} y_k -  a_kz_k - A_kx_{k+1}} \\
        \overset{(b)}{=}\quad & \innerproduct{v_k}{a_k (x - z_k )+  A_{k+1}(y_k -  x_{k+1}) } \\
        \overset{(c)}{=} \quad & \innerproduct{v_k}{a_k (x- z_k )} + \frac{A_{k+1}}{\rho}\|v_k\|^2,
    \end{aligned}
    \end{equation}
    where $(a)$ uses \cref{eq:step-1}, $(b)$ applies $A_{k+1} = a_k + A_k$ and collects like terms, and $(c)$ uses $x_{k+1} = y_k - \frac{1}{\rho}v_k$.
    
    Let us look at the above inner product term
    \begin{equation}
        \label{eq:innerproduct}
        \begin{aligned}
            \innerproduct{v_k}{a_k (x - z_k) } &= \rho \innerproduct{\frac{a_k}{\rho}v_k}{ x - z_k } \\
        &\overset{(a)}{=} \frac{\rho}{2} \|\frac{a_k}{\rho}v_k +x - z_k \|^2 - \frac{\rho}{2} \| \frac{a_k}{\rho}v_k\|^2 - \frac{\rho}{2} \| x - z_k\|^2 \\
        & \overset{(b)}{=} \frac{\rho}{2} \|z_{k+1} - x\|^2  - \frac{a_k^2}{2\rho} \|v_k\|^2 - \frac{\rho}{2} \| z_k - x \|^2  \\
        & \overset{(c)}{=}\frac{\rho}{2} \|z_{k+1} - x\|^2  - \frac{A_{k+1}}{2\rho} \|v_k\|^2 - \frac{\rho}{2} \| z_k - x \|^2,
        \end{aligned}
    \end{equation}
    where $(a)$ uses the identity $2\innerproduct{v}{u} = \|v+u\|^2 - \|v\|^2 - \|u\|^2$ with $v = \frac{a_k}{\rho}v_k$ and $u = x- z_k $, $(b)$ uses the update $z_{k+1} = z_k - \frac{a_k}{\rho}v_k $, and $(c)$ uses the identity $A_{k+1} = a_k^2$ in \cref{lemma:seq-relation}.
    
    Putting \cref{eq:expension,eq:innerproduct} together finishes the proof.
    \end{proof}

We are ready to present the full proof of \cref{lemma:potential}.

\noindent \textbf{Proof of \cref{lemma:potential}}
    The proof follows from a weighted combination of two inequalities. Specifically, fix $x\in \RR^n$, since $v_k \in \partial_{\epsilon_{k+1}}f(x_{k+1})$, we have the following two ineuqalities 
    \begin{subequations}
        \begin{align}
        f(x_{k+1}) - f(x) + \innerproduct{v_k}{x- x_{k+1}} - \epsilon_{k+1} & \leq 0,     \label{eq:key-1-potential}       \\
        f(x_{k+1}) -f(x_k) + \innerproduct{v_k}{x_k - x_{k+1}} - \epsilon_{k+1}  &\leq 0.   \label{eq:key-2-potential}       
    \end{align}
    \end{subequations}
    To simplify the notation, we let $\delta_{k} = f(x_k) - f(x)$. Since $a_k, A_k \geq 0$, we have that 
    \begin{align*}
        a_k \times \cref{eq:key-1-potential}  + A_k \times\cref{eq:key-2-potential}  \leq 0.
    \end{align*}
    Writing out the terms in the weighted sum gives
    \begin{align*}
        a_k\delta_{k+1} &+ a_k\innerproduct{v_k}{x - x_{k+1}}   -a_k \epsilon_{k+1}  \\
        &+ A_k \delta_{k+1}  - A_k \delta_{k}  + A_k \innerproduct{v_k}{x_k - x_{k+1}} - A_k\epsilon_{k+1}  \leq 0.
    \end{align*}
    Grouping like terms and using the update $A_{k+1} = A_k + a_k$, we have 
    \begin{align}
    \label{eq:step-0}
        A_{k+1} \delta_{k+1} + a_k\innerproduct{v_k}{x - x_{k+1}} + A_k \innerproduct{v_k}{x_k - x_{k+1}} - A_{k+1} \epsilon_{k+1} \leq  A_k \delta_{k}.
    \end{align}
     Using \Cref{lemma:key-identity}, \cref{eq:step-0} becomes
    \begin{align*}
        A_{k+1} \delta_{k+1} +  \frac{\rho}{2} \|z_{k+1} - x\|^2  + \frac{A_{k+1}}{2\rho} \|v_k\|^2 -  A_{k+1} \epsilon_{k+1} \leq  A_k \delta_{k} + \frac{\rho}{2} \| z_k - x \|^2.
    \end{align*}
    It becomes clear that if we have $$\frac{A_{k+1}}{2\rho} \|v_k\|^2 -  A_{k+1} \epsilon_{k+1}  \geq 0, $$ then \cref{lemma:potential} is proved. Indeed, the condition \cref{eq:A-HPE-condition} is to ensure this, as we have 
    \begin{align*}
        A_{k+1} \epsilon_{k+1}  \leq \frac{\rho A_{k+1}}{2}\|x_{k+1} - y_k\|^2 = \frac{ A_{k+1}}{2\rho}\|v_k\|^2.
    \end{align*}
    Thus, the proof is complete. \hfill $\blacksquare$
\end{document}